\documentclass[12pt,leqno]{article}  
\usepackage[a-1b]{pdfx}
\usepackage{amssymb,amsmath,amsthm,calc,array,mathrsfs, mathtools, rotating, cite, upgreek, url}
\usepackage{arydshln}
\usepackage{tikz}
\usetikzlibrary{cd,shapes,arrows.meta}
\usepackage[latin1, utf8]{inputenc}
\usepackage{accents}
\usepackage{makeidx}
\usepackage{xstring}
\usepackage{pifont}
\hypersetup{hidelinks}
\usepackage[nameinlink, capitalise]{cleveref}
\usepackage{fancyhdr, titlesec, fancyvrb}
\usepackage[nottoc]{tocbibind}
\usepackage{thmtools}
\usepackage{dashbox}
\usepackage{array}
\usepackage{ stmaryrd }
\usepackage[paper=a4paper,left=25mm,right=25mm,top=25mm,bottom=25mm]{geometry}
\usepackage[all]{xy}
\newcommand\blfootnote[1]{%
  \begingroup
  \renewcommand\thefootnote{}\footnote{#1}%
  \addtocounter{footnote}{-1}%
  \endgroup
}
\newcommand{\bq}{\begin{quote}\begin{footnotesize}}
\newcommand{\eq}{\end{footnotesize}\end{quote}} 
\newcommand{\SSS}{\mathrm S} 
\newcommand{\Q}{\mathbb{Q}}
\newcommand{\C}{\mathbb{C}}
\newcommand{\F}{\mathbb F}
\newcommand{\Fu}[1]{\F_{\! #1}}
\newcommand{\nrm}{\trianglelefteqslant}
\newcommand{\nrme}{\lhd}
\newcommand{\ohne}{\smallsetminus}
\newcommand{\al}{\alpha}
\newcommand{\lm}{\lambda}
\newcommand{\w}{\tilde}
\newcommand{\ti}{\times}
\DeclareMathOperator{\car}{char}
\renewcommand{\=}{\;=\;}
\newcommand{\<}{\backslash}
\newcommand{\hsp}[1]{\hspace*{#1mm}}
\newcommand{\vsp}[1]{\vspace*{#1mm}}
\newtheorem{Satz}{Satz} 
\newtheorem*{theorem}{Theorem}
\newtheorem{Definition}[Satz]{Definition}
\newtheorem{Lemma}[Satz]{Lemma}
\newtheorem{Remark}[Satz]{Remark}
\newtheorem{Example}[Satz]{Example}
\newtheorem{Proposition}[Satz]{Proposition}
\newtheorem{Corollary}[Satz]{Corollary}
\newtheorem{Theorem}[Satz]{Theorem}
\DeclareMathOperator{\Aut}{Aut}
\DeclareMathOperator{\sgn}{sgn}
\newcommand{\Z}{\mathbb{Z}}
\newcommand{\NN}{\mathrm N}
\newcommand{\dell}{\partial}
\newcommand{\auf}{\stackrel}
\newcommand{\mb}[1]{{\mbox{#1}}}
\newcommand{\scr}{\scriptsize}
\newcommand{\scm}{\scriptstyle}
\newcommand{\aufgl}[1]{\auf{\mb{\scr #1}}{=}}
\newcommand{\sollgl}{\aufgl{!}}
\newcommand{\Disj}{\bigsqcup}
\newcommand{\CC}{\mathrm C}
\newcommand{\DD}{\mathrm D}
\newcommand{\la}{\langle}
\newcommand{\rl}{\rangle} 
\renewcommand{\le}{\leqslant}
\renewcommand{\ge}{\geqslant}
\renewcommand{\phi}    {\varphi}
\newcommand{\spi}[1]{\la #1\rl}
\newcommand{\xraiso}[1]{\xrightarrow[\raisebox{1.5mm}{$\smash{\scriptstyle\sim}$}]{#1}}
\newcommand{\ru}[1]{\rule[#1mm]{0mm}{0mm}}
\newcommand{\signum}{\operatorname{sgn}}
\DeclareMathOperator{\id}{id}
\newcommand{\enger}{\setlength{\arraycolsep}{1pt}
                           \renewcommand{\arraystretch}{0.5} }
\newcommand{\weiter}{\setlength{\arraycolsep}{3pt}
                            \renewcommand{\arraystretch}{1.0} }
\newcommand{\smatev}[4]{\enger
                           \left(
                           \begin{array}{cccc}
                            \scm #1  & \scm #2  & \scm #3  & \scm #4  \\
                           \end{array}
                           \right) 
                           \weiter }
\newcommand{\smated}[3]{\enger
                           \left(
                           \begin{array}{ccc}
                            \scm #1 & \scm #2 & \scm #3 \\
                           \end{array}
                           \right) 
                           \weiter }
\newcommand{\smatez}[2]{\enger
                           \left(
                           \begin{array}{cc}
                            \scm #1 & \scm #2 
                           \end{array}
                           \right) 
                           \weiter }
\begin{document}
\title{Absolutely irreducible modules via James triples}
\author{Nora Krau{\ss}}
\date{}
\maketitle
\blfootnote{2020 \emph{Mathematics Subject Classification}: 20C15, 20C20}
\bq
The construction of absolutely irreducible $KG$-modules for a field $K$ and a finite group $G$ is a basic problem in representation theory. For the symmetric group $\SSS_n$, Gordon James \cite{Jam} gave an explicit solution using subgroup data associated to a $\mu$-tableau~$t$ for partitions $\mu\vdash n$. Here, we want to investigate in which way James's method can be extended beyond symmetric groups. We identify suitable subgroup data from a finite group and use it to construct a module such that an associated factor module is absolutely irreducible or zero. 
\eq

\section{Introduction}
Absolutely irreducible modules play a central role in the representation theory of finite groups. Given a field $K$ and a finite group $G$, a $KG$-module is absolutely irreducible if it is irreducible and remains irreducible after extending the base field to an algebraic closure. 

One of the main computational approaches to obtain irreducible modules is provided by the MeatAxe algorithms \cite{Parker1984}\cite{Parker1998}\cite{HoltRees1994}, which are may be used to decompose e.g.\ the regular $KG$-module $KG$ into its composition factors. A more elaborate algorithm due to Steel using splitting and extension techniques is implemented in Magma \cite{Steel2012}.

An approach using data of subgroups is given by Shoda pairs \cite{Shoda1933}. They give a group theoretic framework for constructing primitive idempotents of group algebras. For $K=\Q$ and $G$ being a nilpotent group, a complete set of orthogonal primitive idempotents can be constructed explicitly using strong Shoda pairs, as shown in the work of Jespers, Olteanu, and del Río \cite{Jesp}. In particular, these idempotents generate all irreducible $\Q G$-modules in this case.

For the symmetric group $\SSS_n$, James \cite{Jam} gave an explicit solution using subgroup data associated to a $\mu$-tableau~$t$ for a partition $\mu\vdash n$. From the corresponding row and column stabilisers, he constructed the Specht modules and obtained a complete set of absolutely irreducible $\Q\SSS_n$-modules and, via factor modules, a complete set of irreducible $\Fu{p}\SSS_n$-modules where $p$ is a prime. 

Inspired by James's submodule theorem for the symmetric group, we introduce James triples $(U,V,N)$ of subgroups of a finite group $G$. To a James triple we associate a subfactor of a permutation module and show that it is either zero or absolutely irreducible. The concept of James triples bears similarities to Shoda pairs, but is not directly related.

A {\it James triple} in $G$ of index $k$ is a triple $\mathcal{T}=(U,V,N)$ of subgroups in $G$ such that the conditions (J\,1,\,2,\,3) hold.
\begin{itemize}
\item[(J\,1)] We have $U\cap V\le N$.
\item[(J\,2)] We have $N\nrme V$ such that $|V/N|=k$. 
\item[(J\,3)] For $h\in G\ohne (UV)$ and $w\in V$, we have 
\[
|wN\cap U^h| \= |N\cap U^h|\; ~.
\]
\end{itemize}

Consider the permutation $KG$-module $M_K = K(U\< G)$ and let $\alpha_K: K\xrightarrow{\sim} K$ be a field automorphism. Then $M_K$ carries a canonical non-degenerate $G$-invariant sesquilinear form with respect to $\alpha_K$. Moreover, let $f$ be a group morphism from $V$ to the group of roots of unity $\upmu(K)$ that is compatible with the structure of $K$ and $(U,V,N)$ in a certain sense.

We then let
$	S^{\mathcal{T},\, f}_K \; :=\;\left\langle\sum\limits_{v\in V}Uv\cdot vf\right\rangle_{\hsp{-1}KG}\;\le\; M_K\,.$

Using the canonical non-degenerate $G$-invariant sesquilinear form on $M_K$ to form $\left(S_K^{\mathcal{T},\, f}\right)^\bot$, we obtain the following.
\begin{theorem}\,
	The $KG$-module $$S^{\mathcal{T},\, f}_K/(S^{\mathcal{T},\, f}_K\cap {\left(S^{\mathcal{T},\, f}_K\right)}^{\bot})$$ is absolutely irreducible or zero.
\end{theorem}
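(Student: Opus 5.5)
The plan is to imitate James's submodule theorem. Write $e := \sum_{v\in V} Uv\cdot vf \in M_K$, so that $S := S^{\mathcal{T},\,f}_K = eKG$, and let $(\cdot,\cdot)$ be the canonical $G$-invariant sesquilinear form on $M_K$, for which the cosets $Ug$ form an orthonormal basis. The key claim I aim for is the following. For every $x\in M_K$,
\[
x\cdot \Big(\sum_{v\in V} v^{-1}\cdot c_v\Big) \;\in\; K e ,
\]
where $c_v$ are the scalars ($vf$ or its image under $\alpha_K$, chosen to match the form) for which $(x,e)$ is the coefficient obtained when pairing with $e$. More precisely, the operator $\theta := \sum_{v\in V} v\, \overline{vf}$ (suitably normalised) should satisfy $M_K\theta \subseteq Ke$ and $e\theta = \lambda e$ for a scalar $\lambda$. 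The scalar $\lambda$ is essentially $(e,e)$ up to a factor. Once this is established, James's argument runs verbatim. Let $W\le M_K$ be a submodule. If $W\theta\ne 0$, then $W\theta = Ke$, hence $e\in W$ and $S\le W$. Otherwise $(w,e)=0$ for all $w\in W$, and by $G$-invariance $W\le S^\perp$. So every submodule contains $S$ or lies in $S^\perp$. Applying this to submodules of $S$ shows that $S/(S\cap S^\perp)$ is irreducible or zero. Since the key claim is compatible with field extension ($M_K\otimes L = M_L$ and $e$ is defined over $K$), the same dichotomy holds over $\overline{K}$, which gives absolute irreducibility.

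To prove the key claim it suffices to compute $Ug\,\theta$ for a basis coset $Ug$. I will split into two cases, according to whether $g\in UV$ or not.

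\textbf{Case $g\in UV$.} Write $g = uv_0$. Then $Ug\theta = Uv_0\theta$. Using that $f$ is a morphism on $V$, that $vf$ depends only on $vN$ (by the compatibility of $f$, presumably $N\le\ker f$), and that $U\cap V\le N$ by (J\,1), we get $Uv_0\theta = \overline{v_0f}^{\,-1}\cdot e$ up to normalisation. Condition (J\,1) is what makes this well defined: if $Uv = Uv'$ then $v'v^{-1}\in U\cap V\le N$, so $f$ takes the same value.

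\textbf{Case $h\notin UV$.} Here we must show $Uh\theta = 0$. We have $Uh\theta = \sum_v Uhv\,\overline{vf}$. The coefficient at a fixed coset $Uhv_1$ is $\sum \overline{vf}$, taken over those $v\in V$ with $Uhv = Uhv_1$, that is, $vv_1^{-1}\in U^h\cap V$. Grouping by cosets $wN$ in $V$, this coefficient is
\[
\overline{v_1f}\cdot\sum_{wN\in V/N} |wN\cap U^h|\cdot \overline{wf}.
\]
By (J\,3) this equals $|N\cap U^h|\,\overline{v_1f}\sum_{wN}\overline{wf}$. The remaining sum vanishes if $f$ is nontrivial on $V/N$ (a character sum). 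This nontriviality, or the alternative $|N\cap U^h|=0$ in $K$ (i.e. $\operatorname{char}K$ dividing it), should be exactly the "compatibility" condition on $f$. So the plan needs the precise form of that condition; I expect it to be that $\sum_{wN\in V/N}wf=0$ in $K$, or something making this vanish. It is possible that $N$ is not contained in $\ker f$ and (J\,3) is instead used with $f$ varying on $N$; in that case I would regroup the sum over cosets of $U^h\cap V$ instead.

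I expect the main obstacle to be the case $h\notin UV$: the bookkeeping of $U^h\cap V$ versus $N$, and pinning down exactly which hypothesis on $f$ makes the character sum vanish. Verifying that $\theta$ is adjoint-compatible with the $\alpha_K$-sesquilinear form, so that the coefficient of $e$ in $x\theta$ is $(x,e)$ up to a fixed nonzero factor, is routine but requires care with $\alpha_K(vf)$ versus $(vf)^{-1}$.
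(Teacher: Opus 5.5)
Your overall architecture is the paper's. It has three steps: a rank-one statement for right multiplication by a signed $V$-sum (the Scalar Lemma~\ref{LemGI7}, proved by the same split into $h\in UV$ and $h\notin UV$, with (J\,3) and a vanishing character sum killing the second case); then James's dichotomy via adjointness (Proposition~\ref{simple}); then base change. However, the operator you commit to in the ``more precisely'' sentence is the wrong one, and your step $Uv_0\theta=\overline{v_0f}^{\,-1}e$ fails. Substituting $w=v_0v$ gives $Uv_0\theta=\overline{v_0f}^{\,-1}\sum_{w\in V}Uw\,\overline{wf}$. Under the hypothesis you anticipated, $\overline{vf}=(vf)^{-1}$ (this is fitting condition (1) in Definition~\ref{DefGI1}), the vector $\sum_{w}Uw\,(wf)^{-1}$ is in general not a multiple of $e=\sum_w Uw\cdot wf$. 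It is one only when $f$ is $\{\pm1\}$-valued, as for Specht modules, and no normalisation repairs this. Concretely, take $G=V=\langle c\rangle$ cyclic of order $3$, $U=N=1$, $K=\mathbb{C}$ and $f$ faithful. Then $m_1\theta$ and $e$ are eigenvectors of $c$ for the distinct eigenvalues $cf$ and $(cf)^{-1}$, and one computes $e\theta=0$. Your ``otherwise'' branch applied to $W=S=Ke$ would then yield $S\le S^\perp$, although $[e,e]=3$. As written, your dichotomy concerns the module generated by $\sum_w Uw\,\overline{wf}$, not $S$.

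The repair is to use $\theta_0=\sum_{v\in V}v\cdot vf$, so that $e=m_1\theta_0$; this is the choice $c_v=\overline{vf}$ in your first formulation. The condition $\overline{vf}=v^{-1}f$ is exactly what makes $\theta_0$ self-adjoint, $[x\theta_0,y]=[x,y\theta_0]$, whence $[x,e]=[x\theta_0,m_1]$. This is the computation in Case~2 of the proof of Proposition~\ref{simple}. With $\theta_0$ your argument goes through and coincides with the paper's: $Uv_0\theta_0=v_0^{-1}f\cdot e$ is the paper's $\lambda_h$. Your coefficient computation for $h\notin UV$ is correct with $wf$ in place of $\overline{wf}$, and working with $U^h\cap V$ directly is a little more economical than the paper's route through $h^{-1}Uz$ and Lemma~\ref{LemGI5}. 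The facts you guessed, $N\le\ker f$ and $\sum_{wN\in V/N}wf=0$, are fitting condition (2) and Lemma~\ref{muK}(2). Finally, ``$e$ is defined over $K$'' does not by itself give the dichotomy over $\hat K$. You also need three things: a form on $M_{\hat K}$, i.e.\ an extension of $\alpha_K$ to $\hat K$ (the paper cites Bourbaki); the observation that $f$ still fits; and the identification $\hat K\otimes_K\bigl(S_K/(S_K\cap S_K^\perp)\bigr)\cong S_{\hat K}/(S_{\hat K}\cap S_{\hat K}^\perp)$.
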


If $K$ is a subfield of $\mathbb{C}$ that is closed under complex conjugation $\alpha_{\mathbb{C}}$, then \mbox{$S^{\mathcal{T},\, f}_K\cap {\left(S_K^{\mathcal{T},\, f}\right)}^\bot=0~$} whence $S^{\mathcal{T},\, f}_K$ is absolutely irreducible.

This article is based on Chapter 1 of the author's thesis \cite{NK}. 

\section{Construction of irreducible modules via James triples}
Let $G$ be a finite group. Let $K$ be a field. Let $\alpha_K\in\Aut(K)$ be an automorphism of $K$. We write $\bar\lambda:=\lambda\alpha_K$ for $\lambda\in K$.
We write $\upmu(K):=\{x\in K: \exists s\in\Z_{\ge 1} \text{ such that } x^s=1\}$ for the group of roots of unity of $K$.

\subsection{The Scalar Lemma}\label{setting}
Let $\mathcal{T}=(U,V,N)$ be a triple of subgroups in $G$ such that the following conditions hold.	
	\begin{itemize}
		\item[(J\,1)] We have $U\cap V\le N$.
		\item[(J\,2)] We have $N\nrme V$. 
	\end{itemize}	
	Conditions (J\,1,\,2) can be summarized in the following diagram. 
	\[
	\xymatrix@C-1.0em@R-1.8em{
		& G                                           &                      \\
		U\ar@{-}[ur] &                                             & V \ar@{-}[ul]        \\
		&                                             & N\ar@{-}[u] \\
		& U\cap V\ar@{-}[uul]\ar@{-}[ur] &                      \\
	}
	\]	The index $k:=\frac{|V|}{|N|}\in\Z_{\ge 2}$ is called the {\it index} of $\mathcal{T}=(U,V,N)$.
	
	Let \[\begin{array}{rcl}
	T_{(U,V,N)}&:=&\{h\in G\ohne(UV): \text{we have }|wN\cap U^h| \= |N\cap U^h| \text{ for $w\in V$}\}\subseteq G\ohne(UV)\\
	T'_{(U,V,N)}&:=& T_{(U,V,N)}\cup (UV)\subseteq G~.\end{array}\]	
	We often abbreviate $T:=T_{(U,V,N)}$ and $T':=T'_{(U,V,N)}$.

	\begin{Definition}
		\label{DefGI1}\label{bil}\rm
	Let $f : V\to\upmu(K)$ be a group morphism.	The morphism $f$ is said to {\it fit} the triple $(U,V,N)$ with respect to $\alpha_K: K\xrightarrow{\sim} K$ if the following conditions (1, 2) hold.
	\begin{enumerate}
	\item[(1)]We have $\overline{vf}=(vf)\al_K=(vf)^{-1}=v^{-1}f$ for $v\in V$.
	\item[(2)]\begin{enumerate}
	\item[(i)] If $\car(K)$ does not divide $|V/N|$, then $N\le\ker(f)< V~.$
	\item[(ii)]If $\car(K)$ does divide $|V/N|$, then $N\le\ker(f)\le V~.$
	\end{enumerate}
	\end{enumerate}
	\end{Definition}

\begin{Lemma}\label{muK}\label{fitL}\label{RemGI5_5}\indent
Suppose given a group morphism $f:V\to\upmu(K)$ fitting  $\mathcal{T}=(U,V,N)$ with respect to $\al_K$. Then {\rm(1, 2, 3)} hold.
\begin{enumerate}
\item[\rm(1)] Let $L|K$ be a field extension. Suppose given an automorphism $\alpha_L\in\Aut(L)$ such that $\alpha_L$ restricts to \mbox{$\al_L|_K^K=\al_K$.} The morphism $\w f: V\to\upmu(L):\, v\mapsto vf$ fits the triple $(U,V,N)$ with respect to $\alpha_L$.
\item[\rm(2)] We have $\sum_{w\in V/N} wf=0~.$
\item[\rm(3)] Suppose given $g\in UV$. Suppose given $u,\,\w u\,\in\, U$ and $v,\,\w v\,\in\, V$ such that \mbox{$g = uv = \w u \w v$.} Then we have  $vf = \w v f$.
\end{enumerate}
\end{Lemma}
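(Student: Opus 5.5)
We prove the three parts in order; each is short.

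For (1), note that $\w f$ takes values in $\upmu(K)\subseteq\upmu(L)$, and for $v\in V$ we have $\overline{v\w f}=(vf)\al_L=(vf)\al_K=(vf)^{-1}=v^{-1}\w f$, since $\al_L$ restricts to $\al_K$. Condition (2) of Definition \ref{DefGI1} depends only on $\ker(\w f)=\ker(f)$, on $N$, on $V$ and on $\car(L)=\car(K)$. So the same case (i) or (ii) applies for $L$ as for $K$, with the same inclusions. Hence $\w f$ fits $(U,V,N)$ with respect to $\al_L$.

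For (2), first observe that the sum is well defined. Since $N\le\ker(f)$, the value $wf$ depends only on the coset $wN$, so $f$ induces a morphism $\hat f: V/N\to K^\times$. We then distinguish two cases.
\begin{itemize}
\item If $\car(K)\nmid |V/N|$, then $\ker(f)<V$, so $\hat f$ is a nontrivial character of the finite group $V/N$. Choose $x\in V/N$ with $x\hat f\ne 1$. Reindexing the sum by multiplication with $x$ gives $x\hat f\cdot\Sigma=\Sigma$, where $\Sigma:=\sum_{w\in V/N}w\hat f$. Hence $(x\hat f-1)\Sigma=0$, and therefore $\Sigma=0$.
\item If $\car(K)\mid |V/N|$, the same argument works when $\hat f$ is nontrivial. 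When $\hat f$ is trivial, the sum equals $|V/N|\cdot 1_K=0$ in $K$.
\end{itemize}
The only point needing care is this case split, which explains the two versions of condition (2) in Definition \ref{DefGI1}.

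For (3), from $uv=\w u\w v$ we get $\w u^{-1}u=\w v v^{-1}\in U\cap V$. By (J\,1) this element lies in $N$, and $N\le\ker(f)$. Therefore $1=(\w v v^{-1})f=(\w v f)(vf)^{-1}$, which gives $vf=\w v f$.

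I expect no step to be a real obstacle. The mildest subtlety is handling the trivial-character case in (2) via the characteristic.
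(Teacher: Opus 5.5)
Your proof is correct and matches the paper's argument in all three parts. In (2) you make explicit the reindexing argument that the paper compresses into the remark that $|(V/N)f|>1$ forces the sum to vanish, and you handle the trivial case $\ker(f)=V$ via $|V/N|\cdot 1_K=0$ exactly as the paper does.
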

{\it Proof.} 
{\it Ad} (1). We have $(v\w f)\al_L=(vf)\al_L=(vf)\al_K=(vf)^{-1}=v^{-1}f=v^{-1}\w f$ for $v\in V$, so condition (1) of Definition \ref{DefGI1} holds.

We have $\car(L)=\car(K)$, so, as $\ker(f)=\ker(\w f)$, condition (2) of Definition \ref{DefGI1} holds.
 
{\it Ad} (2). Regardless of $\car(K)$, if $N\le\ker(f)< V$ then $|(V/N)f|>1$ and hence $\sum\limits_{x\in V/N}x f=0$.
 
 If $\car(K)$ does divide $|V/N|$ and $\ker(f)=V$ then $ \sum\limits_{x\in V/N}xf=\sum\limits_{x\in V/N} 1= |V/N|=0$.
 
{\it Ad} (3).  We have $\w u^{-1}u =  \w v v^{-1} \in U\cap V\;.$ As $U\cap V\le N\le\ker(f)$, we have $(\w v v^{-1})f=1$ and thus $vf = \w v f$.
 \qed

\begin{Definition}\label{ses} We consider the permutation $KG$-module  $M_K := K(U\< G)$ belonging to $U$. Let $m_1 := U\cdot 1 \in M_K\,$. So $m_1\cdot g = Ug\in M_K$ for $g\in G$. We equip $M_K$ with the following non-degenerate $G$-invariant sesquilinear form with respect to $\al_K$:
\[\hspace*{-5mm}
\begin{array}{rcrclcl}
{[-,=]_K=[-,=]} & : & M_K          & \ti & M_K         & \to     & K \\
&   & \left(m_1\cdot g \right. & ,   & \left. m_1\cdot h\right) & \mapsto & [m_1\cdot g,m_1\cdot h] \;:=\;\dell_{m_1\cdot g,\,m_1\cdot h}~,
\end{array} 
\] where $g,h\in G$.
Explicitly, for $x=\sum\limits_{g\in G} g\lambda_g$, $y=\sum\limits_{h\in G} h\nu_h\in KG$, we have
\[\begin{array}{rclcl}
[m_1\cdot x,m_1\cdot y] &=& [m_1\sum\limits_{g\in G} g\lambda_g, m_1\sum\limits_{h\in G} h\nu_h]
						&=& \sum\limits_{g,h\in G} [m_1\cdot g, m_1\cdot h]\lambda_g\overline{\nu_h} \\
						&=& \sum\limits_{g,h\in G}\dell_{m_1\cdot g,\,m_1\cdot h} \lambda_g\overline{\nu_h}
						&=& \sum\limits_{\begin{subarray}{c} g,h\in G\\Ug\,=\,Uh\end{subarray}} \lambda_g\overline{\nu_h}~.
\end{array}\]
\end{Definition}

\begin{Remark}\label{possesMK}\rm
Let $\al_\C:\C\to\C$ be the complex conjugation. Suppose that $K|\mathbb{Q}$ is a subfield of $\mathbb{C}$ that is closed under the complex conjugation $\alpha_{\mathbb{C}}$, i.e.\ $\kappa\alpha_{\mathbb{C}}\in K$ for $\kappa\in K$. Suppose that $\alpha_K=\alpha_{\mathbb{C}}|_K^{K}$\,.
Note that the $G$-invariant sesquilinear form $[-,=]$ on $M_K$ with respect to $\al_K$ is not only non-degenerate, but positive definite.
\end{Remark}

\begin{Definition}\rm
For $h\in G$, we define the $K$-linear coefficient function 
\[
\begin{array}{rcrcl}
\chi_{Uh} & : & M_K          & \to     & K                         \\
&   & m  & \mapsto & {[m,m_1\cdot h]}~. 
\end{array}
\] In particular, $(m_1\cdot g)\chi_{Uh}=\dell_{Ug,Uh}$ for $g\in G$.
\end{Definition}

\begin{Lemma}\label{LemGI5}
Suppose given $z\in G\ohne (UV)$. Suppose given $h\in UzV$ with $h\in T$.

For $w,\,\w w\,\in\, V$, we have
\[
|wN\cap h^{-1}Uz| \= |\w w N\cap h^{-1}Uz|\; .
\]
\end{Lemma}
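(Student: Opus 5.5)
The plan is to rewrite the set $h^{-1}Uz$ as a translate of a conjugate of $U$, and then transport the intersection $wN\cap h^{-1}Uz$ by a bijection of $G$ onto a set of the form $xN\cap U^h$ with $x\in V$. Condition (J\,2) together with the defining property of $T$ then makes the cardinality independent of $w$.

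First, since $h\in UzV$, write $h=uzv$ with $u\in U$ and $v\in V$. Then $h^{-1}=v^{-1}z^{-1}u^{-1}$, and since $u^{-1}U=U$ we get
\[
h^{-1}Uz \= v^{-1}z^{-1}Uz \= v^{-1}U^z ,
\]
where $U^z=z^{-1}Uz$. We also record the conjugate
\[
U^h \= v^{-1}z^{-1}u^{-1}Uuzv \= v^{-1}U^z v .
\]

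Next, for $w\in V$ we transport $wN\cap v^{-1}U^z$ by the bijection $G\to G$, $x\mapsto xv$ (right multiplication by $v$). It maps the set onto
\[
wNv\cap v^{-1}U^zv \= wNv\cap U^h .
\]
Since $N\nrme V$ by (J\,2) and $v\in V$, we have $Nv=vN$, so $wNv=wvN$. Hence
\[
|wN\cap h^{-1}Uz| \= |wvN\cap U^h| .
\]

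Finally, $wv\in V$ and $h\in T$, so by the definition of $T$ (condition (J\,3) applied at $h$) we obtain $|wvN\cap U^h|=|N\cap U^h|$. This value does not depend on $w$. Applying the same chain to $\w w$ gives $|\w w N\cap h^{-1}Uz|=|N\cap U^h|$ as well, which proves the claim.

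The only step needing care is the bookkeeping of left versus right cosets when conjugating. In particular, one must use the normality of $N$ in $V$ to turn $wNv$ into a left coset $wvN$ of $N$ in $V$, since only then does the defining property of $T$ apply. The hypothesis $z\notin UV$ is not used directly beyond guaranteeing that $h\in G\ohne(UV)$, which is already implied by $h\in T$.
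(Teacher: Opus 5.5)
Your proof is correct and follows essentially the same route as the paper. The paper writes $z=uhv$ and gets $h^{-1}Uz=U^h v$ directly, while you go through $v^{-1}U^z$; in both, the set becomes a right translate of $U^h$, normality of $N$ in $V$ turns $wNv$ into the left coset $wvN$, and the defining property of $h\in T$ finishes the argument.
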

{\it Proof.} Write $z = uhv$ for some $u\in U$ and some $v\in V$. 
Note that \mbox{$h^{-1}Uz = h^{-1} U uhv = U^h\, v $.}
So the assertion is equivalent to having $|wN\,\cap\, U^h\, v | = |\w w N\,\cap\, U^h\, v|$ for $w,\,\w w\,\in\, V$, i.e.\ to having $|wNv^{-1}\cap U^h| = |\w wNv^{-1}\cap U^h|$ for $w,\w w\,\in\, V$. 

As $N\nrme V$, this is equivalent to having $|wv^{-1}N\cap U^h| = |\w wv^{-1}N\cap U^h|$ for $w,\,\w w\in V$, i.e.\ to having $|wN\cap U^h| = |\w wN\cap U^h|$ for $w,\w w\,\in\, V$, which holds by assumption on $h$.\qed

\begin{Lemma}[Scalar Lemma]
\label{LemGI7}
Suppose given a group morphism $f: V\to\upmu(K)$ fitting $\mathcal{T}=(U,V,N)$. For $h\in T'_{(U,V,N)}$, we set using Lemma \ref{RemGI5_5}.(3)
\[
\lm_h \; :=\; 
\left\{
\begin{array}{ll}
v^{-1} f & \text{if $h\in UV$, where $h = uv$ with $u\in U$ and $v\in V$} \\
0           & \text{if $h\in T=T'\ohne (UV)$.}                                      \\        
\end{array}
\right.
\] Then
\[
\sum\limits_{v\in V} U\!hv\cdot vf \= \lm_h\cdot \sum\limits_{v\in V} Uv\cdot vf\; 
\] holds for $h\in T'_{(U,V,N)}$.

\end{Lemma}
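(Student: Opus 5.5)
The plan is to split into the two cases of the definition of $\lm_h$.

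\emph{Case $h\in UV$.} Write $h=uv_0$ with $u\in U$, $v_0\in V$; by Lemma \ref{RemGI5_5}.(3), $v_0f$ does not depend on the chosen factorisation. Since $Uu=U$, we get $Uhv=Uv_0v$. We substitute $v'=v_0v$, which runs through $V$ as $v$ does:
\[
\sum_{v\in V}Uhv\cdot vf=\sum_{v'\in V}Uv'\cdot (v_0^{-1}v')f=(v_0^{-1}f)\sum_{v'\in V}Uv'\cdot v'f .
\]
This gives the claim with $\lm_h=v_0^{-1}f$.

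\emph{Case $h\in T$.} Here $h\notin UV$, and we must show the left hand side vanishes. Since $M_K$ has basis the cosets $Ug$, we test against the coefficient functions $\chi_{Uz}$. Fix $z\in G$. The coefficient of $Uz$ is
\[
\sum_{\substack{v\in V\\ Uhv=Uz}} vf .
\]
The condition $Uhv=Uz$ is equivalent to $v\in h^{-1}Uz$. If this set meets $V$, then $z\in UhV$, so $h\in UzV$. Moreover $z\notin UV$, since otherwise $h\in UV$. So we may assume $z\in G\ohne(UV)$ with $h\in UzV$ and $h\in T$, which is exactly the setting of Lemma \ref{LemGI5}.

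The coefficient is then $\sum_{v\in V\cap h^{-1}Uz} vf$. We group it by cosets $wN$ for $w\in V/N$. By (2) of Definition \ref{DefGI1}, $N\le\ker(f)$, so $f$ is constant on each $wN$. Hence the coefficient equals
\[
\sum_{w\in V/N}|wN\cap h^{-1}Uz|\cdot wf .
\]
By Lemma \ref{LemGI5}, the cardinality $|wN\cap h^{-1}Uz|=:c$ is independent of $w$. The coefficient is therefore $c\cdot\sum_{w\in V/N}wf$, which is $0$ by Lemma \ref{RemGI5_5}.(2).

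The only delicate point is the bookkeeping in the second case. One must identify correctly the set of $v$ contributing to the coefficient of $Uz$, namely $V\cap h^{-1}Uz$, and note that $wN\subseteq V$, so that $wN\cap h^{-1}Uz$ is exactly the part of this set lying in the coset $wN$. The remaining work is handled by the earlier lemmas.
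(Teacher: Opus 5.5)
Your proof is correct. For $h\in T$ you follow the paper's argument, its Case~4 in the subcase $h\in UzV$. You fold the paper's Case~2 and the subcase $h\notin UzV$ into one observation: a nonzero coefficient at $Uz$ forces $z\notin UV$ and $h\in UzV$. That is the same reasoning the paper uses, just organised by $h$ alone rather than by the pair $(h,z)$.

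For $h\in UV$ your route genuinely differs. The paper compares coefficients at $Uz$ in two separate cases:
\begin{itemize}
\item Case~1, $z\in UV$: it computes both sides as $|U\cap V|\cdot v^{-1}f\cdot\w vf$, using $cf=1$ for $c\in U\cap V$.
\item Case~3, $z\notin UV$: it checks that both sides vanish.
\end{itemize}
Your substitution $v'=v_0v$ instead proves the identity of vectors in $M_K$ directly, in one line. This is shorter. It also shows that this case needs neither the coefficient functions $\chi_{Uz}$ nor (J\,1), except to make $\lm_h$ well defined via Lemma~\ref{RemGI5_5}.(3). The paper's version has only the modest advantage of handling every case in the same coefficient-comparison framework.
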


{\it Proof.} We need to show that for $h\in T'$ we have 
\[
\left(\sum\limits_{v_0\in V} U\!hv_0\cdot v_0f\right)\chi_{Uz} \;\sollgl\; \left(\lm_h\cdot \sum\limits_{v_0\in V} Uv_0\cdot v_0f\right)\chi_{Uz}\; 
\]
for $z\in G$.

To evaluate the equation to be shown, we distinguish four cases.\vsp{1}

{\it Case 1: $h\in UV$ and $z\in UV$.}

Write $h=uv$ and $z=\w u\w v$ with $u,\w u\in U$ and $v,\w v\in V$.

On the right-hand side, we obtain 
\[\left(\lm_h \sum\limits_{v_0\in V} Uv_0\cdot v_0f\right)\chi_{Uz}
=\lm_h\sum\limits_{\begin{subarray}{c}v_0\in V\\ Uv_0=U\w v\end{subarray}}v_0 f
=\lm_h\sum\limits_{v_0\in (U\cap V)\w v}v_0f
=\lm_h\cdot |U\cap V|\cdot \w vf=|U\cap V|\cdot v^{-1}f\cdot\w vf~.
\]
On the left-hand side, we obtain
\[\left(\sum\limits_{v_0\in V} U\!hv_0\cdot v_0f\right)\chi_{Uz}
=\left(\sum\limits_{v_0\in V} U\!vv_0\cdot v_0f\right)\chi_{U\w v}
=\sum\limits_{\begin{subarray}{c}v_0\in V\\ Uvv_0=U\w v\end{subarray}}v_0 f
=\sum\limits_{\begin{subarray}{c}v_0\in V\\ vv_0\w v^{-1}\in U\end{subarray}}v_0 f=\sum\limits_{v_0\in V\cap  v^{-1}U\w v}v_0f~.
\] 
We have $V\cap v^{-1}U\w v= v^{-1}\left(v V\w v^{-1}\cap U\right) \w v= v^{-1} (U\cap V) \w v.$

So, $$\sum\limits_{v_0\in V\cap v^{-1}U\w v}v_0f=\sum\limits_{v_0\in v^{-1}(U\cap V)\w v}v_0f=\sum\limits_{c\in U\cap V}(v^{-1}c\w v)f=\sum\limits_{c\in U\cap V}v^{-1}f\cdot \underbrace{cf}_{\begin{subarray}{c}=1\\ \text{ by (J\,1)}\end{subarray}}\cdot\, \w vf=|U\cap V|\cdot v^{-1}f\cdot\w vf\,,$$ which is the same as on the right-hand side.\vsp{1}

{\it Case 2: $h\not\in UV$ and $z\in UV$.}

Write $z=\w u\w v$ with $\w u\in U$ and $\w v\in V$.

On the right-hand side, we obtain, by definition of $\lambda_h$\,, that
$ \left(\lm_h\cdot \sum\limits_{v_0\in V} Uv_0\cdot v_0f\right)\chi_{Uz}
=0~.$

On the left-hand side, we obtain
$\left(\sum\limits_{v_0\in V} U\!hv_0\cdot v_0f\right)\chi_{Uz}
=\left(\sum\limits_{v_0\in V} U\!hv_0\cdot v_0f\right)\chi_{U\w v}
=\sum\limits_{\begin{subarray}{c}v_0\in V\\ Uhv_0=U\w v\end{subarray}}v_0 f~.
$ 

Now, for $v_0\in V$, we have $Uhv_0=U\w v$ if and only if there exists $u\in U$ such that $hv_0=u\w v$. But then $h=u\w v v_0^{-1}\in UV$, which is not the case.

So, $\sum\limits_{\begin{subarray}{c}v_0\in V\\ Uhv_0=U\w v\end{subarray}}v_0 f=0\,,$ which is the same as on the right-hand side.\vsp{1}

{\it Case 3: $h\in UV$ and $z\not\in UV$.}

Write $h=uv$ with $u\in U$ and $v\in V$.

On the right-hand side, we obtain 
$\left(\lm_h\cdot \sum\limits_{v_0\in V} Uv_0\cdot v_0f\right)\chi_{Uz}
=\lm_h\sum\limits_{\begin{subarray}{c}v_0\in V\\ Uv_0=Uz\end{subarray}}v_0 f~.
$

Now, for $v_0\in V$, we have $Uv_0=Uz$ if and only if $z\in Uv_0\subseteq UV$, which is not the case.

So, $\left(\lm_h\cdot \sum_{v_0\in V} Uv_0\cdot v_0f\right)\chi_{Uz}=0~.$

On the left-hand side, we obtain
$ \left(\sum\limits_{v_0\in V} U\!hv_0\cdot v_0f\right)\chi_{Uz}
=\left(\sum\limits_{v_0\in V} U\!vv_0\cdot v_0f\right)\chi_{Uz}
=\sum\limits_{\begin{subarray}{c}v_0\in V\\ Uvv_0=Uz\end{subarray}}v_0 f~.
$

Now, for $v_0\in V$, we have $Uvv_0=Uz$ if and only if $z\in Uvv_0\subseteq UV$, which is not the case.

So, $\left(\sum\limits_{v_0\in V} U\!hv_0\cdot v_0f\right)\chi_{Uz}=0\,,$ which is the same as on the right-hand side.\vsp{1}

{\it Case 4: $h\not\in UV$ and $z\not\in UV$.}

On the right-hand side, we obtain $\left(\lm_h\cdot \sum\limits_{v_0\in V} Uv_0\cdot v_0f\right)\chi_{Uz}=0\,,$ by definition of $\lambda_h$\,.

On the left-hand side, we have $\left(\sum\limits_{v_0\in V}Uhv_0\cdot v_0f\right)\chi_{Uz}=\sum\limits_{\begin{subarray}{c}v_0\in V\\Uhv_0=Uz\end{subarray}}v_0f$.

For $v_0\in V$, we have \[
	Uhv_0=Uz
	\Leftrightarrow hv_0=uz \text{ for some $u\in U$}
	\Leftrightarrow h=uzv_0^{-1} \text{ for some $u\in U$.}
\]
{\it Subcase $h\not\in UzV$.} We obtain $\left(\sum\limits_{v_0\in V}Uhv_0\cdot v_0f\right)\chi_{Uz}=0\,,$ which is the same as on the right-hand side.

{\it Subcase $h\in UzV$.} 
We have \[
	Uhv_0=Uz\Leftrightarrow hv_0\in Uz
	\Leftrightarrow v_0\in h^{-1}Uz~.
\]
Note that $h\in T'\setminus (UV)=T$. By Lemma \ref{LemGI5}, we have $|wN\cap h^{-1}Uz|=|N\cap h^{-1}Uz|$ for $w\in V$ and therefore 
\[\begin{array}{rclcl}\left(\sum\limits_{v_0\in V}Uhv_0\cdot v_0f\right)\chi_{Uz}
&=&\sum\limits_{\begin{subarray}{c}v_0\in V\\Uhv_0=Uz\end{subarray}}v_0f
&=&\sum\limits_{v_0\in V\cap h^{-1}Uz}v_0f\\&=&\sum\limits_{w\in V/N}\sum\limits_{v_0\in wN\cap h^{-1}Uz}v_0f
&\stackrel{\rm D.\ref{DefGI1}.(2)}{=}&\sum\limits_{w\in V/N}|wN\cap h^{-1}Uz|\cdot wf\\
&\stackrel{\rm L.\ref{LemGI5}}{=}&|N\cap h^{-1}Uz|\cdot\sum\limits_{w\in V/N}wf&\stackrel{\rm L.\ref{muK}(2)}{=}&0\,,\end{array}\] which is the same as on the right-hand side. 
\qed

\subsection{James triples}
Since in the Scalar Lemma, Lemma \ref{LemGI7}, we have obtained an assertion for $h\in T'_{(U,V,N)}$, we consider the case $T'_{(U,V,N)}=G$.
\subsubsection{Definition and first properties of James triples}

\label{SecDefJamesTr}
Recall that given a triple of subgroups $\mathcal{T}=(U,V,N)$ in $G$ satisfying (J\,1, 2) we have
\[\begin{array}{rcl}
	T=T_{(U,V,N)}&=&\{h\in G\ohne(UV): \text{we have }|wN\cap U^h| \= |N\cap U^h| \text{ for $w\in V$}\}\subseteq G\ohne(UV)\\
	T'=T'_{(U,V,N)}&=& T_{(U,V,N)}\cup (UV)\subseteq G~,\end{array}\] cf.\ \S\ref{setting}.
\begin{Definition}\label{triples}
\rm
Suppose given $k\in\Z_{\ge 2}\,$.

A {\it James triple} in $G$ of index $k$ is a triple $\mathcal{T}=(U,V,N)$ of subgroups in $G$ such that the conditions (J\,1,\,2,\,3) hold.

\begin{itemize}
\item[(J\,1)] We have $U\cap V\le N$.
\item[(J\,2)] We have $N\nrme V$ such that $|V/N|=k$. 
\item[(J\,3)] We have $T'_{(U,V,N)}=G$. I.e.\ for $h\in G\ohne (UV)$ and for $w\in V$, we have \[
|wN\cap U^h| \= |N\cap U^h|\; ~.
\]
\end{itemize}
\end{Definition}
\begin{Remark}\label{cyc}\rm
Given $U\le N\nrme G$, the triple $(U, G, N)$ is a James triple in $G$ of index $|G/N|$.
\end{Remark}

\begin{Lemma}
\label{RemGI3}
Given a James triple $(U,V,N)$ of index $k$ and $h\in G\ohne (UV)$, we have \[k\cdot |N\cap U^h| = |V\cap U^h|~.\]
\end{Lemma}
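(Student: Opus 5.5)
The plan is to partition $V\cap U^h$ according to the cosets of $N$ in $V$, and then apply (J\,3) to each piece.

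Since $N\nrme V$ by (J\,2), the group $V$ is the disjoint union of the $k=|V/N|$ cosets $wN$, where $w$ runs through a set of representatives of $V/N$. Intersecting this decomposition with $U^h$ gives
\[
V\cap U^h \= \Disj_{w\in V/N}\,(wN\cap U^h)\,,
\qquad\text{so}\qquad
|V\cap U^h| \= \sum_{w\in V/N}|wN\cap U^h|\,.
\]

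Now fix $h\in G\ohne(UV)$. Condition (J\,3), in the form $T'_{(U,V,N)}=G$, says precisely that $h\in T$. Hence $|wN\cap U^h|=|N\cap U^h|$ for every $w\in V$. Each of the $k$ summands is therefore equal to $|N\cap U^h|$, and the sum equals $k\cdot|N\cap U^h|$, as claimed.

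There is no real obstacle. The only points to check are that the coset decomposition is well defined and disjoint, and that the summand does not depend on the choice of coset representative $w$. Both are immediate, since $wN$ depends only on the coset itself.
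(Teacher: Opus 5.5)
Your proof is correct and follows the paper's argument exactly: decompose $V$ into the $k$ cosets $wN$, intersect with $U^h$ to get a disjoint union, and use (J\,3) to see that each piece has size $|N\cap U^h|$. A minor aside: normality of $N$ is not actually needed for the coset decomposition itself.
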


{\it Proof.} We have the disjoint union $V = \Disj_{w\in V/N} w N$. 

So, we have the disjoint union $V\cap U^h = \Disj_{w\in V/N} (w N\cap U^h)$. 

By (J\,3), we conclude that $|V\cap U^h| = k\cdot |N\cap U^h|$.
\qed

\begin{Lemma}
\label{RemGI4}
Suppose given a triple $(U,V,N)$ of subgroups such that {\rm (J\,1,\,2)} hold and such that $|V/N| = 2$. 

Then $(U,V,N)$ is a James triple in $G$ of index $2$ if and only if $N\cap U^h \nrme V\cap U^h$ for $h\in G\ohne (UV)$.
\end{Lemma}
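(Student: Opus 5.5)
Since (J\,1,\,2) are assumed and $|V/N|=2$, the statement reduces to this: condition (J\,3), i.e.\ $|wN\cap U^h| = |N\cap U^h|$ for all $w\in V$ and all $h\in G\ohne(UV)$, holds if and only if $N\cap U^h \nrme V\cap U^h$ for all such $h$. We fix $h\in G\ohne(UV)$ and prove the equivalence for this $h$. Write $A:=V\cap U^h$, a subgroup of $V$, and $B:=N\cap U^h = N\cap A$. Because $N\nrme V$, the second isomorphism theorem gives $B=A\cap N\nrme A$ automatically, and $A/B\cong AN/N\le V/N$. Read literally, the normality condition therefore always holds, and it carries no information. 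So we interpret the intended condition as requiring $N\cap U^h$ to be a normal subgroup of index $2$ in $V\cap U^h$, that is, $|A:B|=2$. Lemma \ref{RemGI3} supports this reading: for a James triple it gives $|A|=2|B|$.

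For the forward direction, assume $(U,V,N)$ is a James triple of index $2$. Lemma \ref{RemGI3} gives $|V\cap U^h| = 2\,|N\cap U^h|$. Hence $B$ has index $2$ in $A$, and in particular $B\nrme A$.

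For the converse, assume $|A:B|=2$ for every $h\in G\ohne(UV)$. Since $|V/N|=2$, we have $V=N\sqcup wN$ for any $w\in V\ohne N$. Then $A = (N\cap U^h)\sqcup(wN\cap U^h)$, so $|wN\cap U^h| = |A|-|B| = |B| = |N\cap U^h|$. For $w\in N$ the equality is trivial. This proves (J\,3).

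The only real obstacle is interpretive. Normality by itself is automatic, so it cannot be the intended content. The argument goes through if "$\nrme$" is read, as in (J\,2), to include that the index equals $|V/N|=2$. Failing that, one can show that the quotient $A/B$ embeds in $V/N\cong C_2$, so its order is $1$ or $2$, and what must be excluded is the case $A/B$ trivial, i.e.\ $A\le N$. In that case $wN\cap U^h=\emptyset$ for $w\notin N$ while $N\cap U^h\ne\emptyset$, so (J\,3) fails. The equivalence therefore holds exactly when the condition is read as a proper normal inclusion of index $2$, and the proof will state this reading explicitly.
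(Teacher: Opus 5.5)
Your proof is correct and follows the paper's argument. The forward direction goes via Lemma~\ref{RemGI3}. For the converse, you observe that $|V\cap U^h:N\cap U^h|\in\{1,2\}$; the paper gets this from $|(V\cap U^h)N|\le|V|$, while you use $A/B\cong AN/N\le V/N$. Then, for $w\notin N$, you compute $|wN\cap U^h|=|V\cap U^h|-|N\cap U^h|$, exactly as the paper does.

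The interpretive issue you raise is already settled by the paper's notation. There $\nrme$ is defined as $\lhd$, meaning a \emph{proper} normal subgroup, as opposed to $\nrm$, i.e.\ $\trianglelefteqslant$. So your reading ``proper, hence of index $2$'' is exactly the intended statement.
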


{\it Proof.}
If $(U,V,N)$ is a James triple of index $2$ in $G$, then  $N\cap U^h \nrme V\cap U^h$ for $h\in G\ohne (UV)$; cf.\ Lemma~\ref{RemGI3}.

Suppose that $N\cap U^h \nrme V\cap U^h$ for $h\in G\ohne (UV)$. 

We have $\dfrac{|V\cap U^h|}{|N\cap U^h|}\cdot |N|=|(V\cap U^h)\cdot N|\le |V|$. 

So $\dfrac{|V\cap U^h|}{|N\cap U^h|}\in\{1,2\}$. Hence, by assumption, $|V\cap U^h|=2|N\cap U^h|$.

Suppose given $w\in V$.

If $w\in N$, then we have $|wN\cap U^h| \= |N\cap U^h|\; .$

If $w\not\in N$, then we have $|wN\cap U^h| \= |(V\ohne N)\cap U^h| \= |V\cap U^h| - |N\cap U^h| \= |N\cap U^h|\; .$

Hence, (J\,3) holds. So $(U,V,N)$ is a James triple of index $2$ in $G$.
\qed

\subsubsection{Absolutely irreducible modules via James triples}
\label{SecSimpJamesTr}
We recall the permutation $KG$-module $M_K = K(U\< G)$ and that we write \mbox{$m_1 = U\cdot 1 \in M_K\,$.}

Suppose given a James triple $\mathcal{T}=(U,V,N)$ in $G$ of index $k$, cf.\ Definition \ref{triples}. 

Suppose given a group morphism $f : V\to\upmu(K)$  fitting $\mathcal{T}$ with respect to the automorphism $\al_K:K\xrightarrow{\sim} K, x\mapsto\overline{x}$, cf.\ Definition \ref{DefGI1}.

\begin{Definition}\label{Def_S_K}\rm
	We define the $KG$-submodule of $M_K$
	\[
	S^{\mathcal{T},\, f}_K \; :=\;\spi{\sum\limits_{v\in V}m_1v\cdot vf}_{KG}=\;\spi{\sum\limits_{v\in V}Uv\cdot vf}_{KG}\;\le M_K .
	\]

We often abbreviate $S_K=S^{\mathcal{T},\, f}_K$.

Let $S_K^\bot:=\{x\in M_K :[x,S_K]_K=0\}\subseteq M_K$. As $[-,=]$ is $G$-invariant this is a $KG$-submodule. 
\end{Definition}

\begin{Lemma}\label{zero}
	The module $S^{\mathcal{T},\, f}_K$ is non-zero if and only if $\car(K)$ does not divide $|U\cap V|$.
\end{Lemma}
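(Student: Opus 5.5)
Since $S_K$ is generated as a $KG$-module by the single element
\[
e\;:=\;\sum_{v\in V} m_1 v\cdot vf\,,
\]
we have $S_K\neq 0$ if and only if $e\neq 0$. We plan to decide this by computing the coefficients of $e$ with respect to the basis of right cosets $\{Ug: g\in G\}$ of $M_K$. Concretely, for each $z\in G$ we evaluate $e\chi_{Uz}$.

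\emph{Step 1: coefficients outside $UV$.} For $z\notin UV$ we have $e\chi_{Uz}=0$, since every coset $Uv$ with $v\in V$ lies in $UV$. This is the computation already carried out in Case 3 of the proof of the Scalar Lemma \ref{LemGI7}.

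\emph{Step 2: coefficients inside $UV$.} For $z=\w u\w v\in UV$ we have $Uz=U\w v$. Hence
\[
e\chi_{Uz}\;=\;\sum_{\substack{v\in V\\ Uv=U\w v}} vf\;=\;\sum_{v\in (U\cap V)\w v} vf\;=\;|U\cap V|\cdot \w v f\,.
\]
The last equality uses $U\cap V\le N\le \ker(f)$, which follows from (J\,1) and Definition \ref{DefGI1}.(2). This is exactly the right-hand computation in Case 1 of the proof of Lemma \ref{LemGI7}, taken with $\lambda_h=1$.

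\emph{Step 3: conclusion.} Since $\w vf\in\upmu(K)$ is a unit, the coefficient of $e$ at $U\w v$ equals $|U\cap V|\cdot 1_K$ times a nonzero scalar. So if $\car(K)$ does not divide $|U\cap V|$, the coefficient of $e$ at the basis vector $m_1=U\cdot 1$ (take $\w v=1$) is $|U\cap V|\neq 0$, and therefore $e\neq0$. If $\car(K)$ divides $|U\cap V|$, then by Steps 1 and 2 all coefficients of $e$ vanish, so $e=0$ and hence $S_K=0$.

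No step is a real obstacle here. The only point requiring care is that $f$ is constant on cosets of $U\cap V$ in $V$, which follows from $U\cap V\le N\le\ker(f)$. Note that the argument does not use (J\,3) at all.
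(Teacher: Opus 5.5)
Your proposal is correct and is essentially the paper's argument. The paper regroups the sum over the right cosets $(U\cap V)w$ to write the generator as $|U\cap V|\sum_{w\in (U\cap V)\backslash V}Uw\cdot wf$, with distinct basis vectors and root-of-unity coefficients. You read off the same coefficients $|U\cap V|\cdot \tilde v f$ one at a time via $\chi_{Uz}$; both versions rest only on $U\cap V\le N\le\ker(f)$ and neither uses (J\,3).
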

{\it Proof.}
Note that for $w, \tilde w\in V$, we have $Uw=U\tilde w$ if and only if $w\tilde w^{-1}\in U\cap V$, i.e.\ if and only if $(U\cap V)w=(U\cap V)\tilde w$.
We have, using $U\cap V\le N\le\ker(f)$, \[\begin{array}{rcl}\sum\limits_{v\in V}Uv\cdot vf
&=&\hsp{-2}\sum\limits_{w\in (U\cap V)\< V}\sum\limits_{c\in U\cap V}Ucw\cdot (cw)f
=\sum\limits_{w\in (U\cap V)\< V}Uw\left(\sum\limits_{c\in U\cap V}cf\right)\cdot wf\\
&=&|U\cap V|\sum\limits_{w\in (U\cap V)\< V}Uw\cdot wf~.\end{array}\]
As $\sum\limits_{w\in (U\cap V)\< V}Uw\cdot wf\ne 0$ , $S_K$ is zero if and only if $\car(K)$ divides $|U\cap V|$.\qed

\begin{Lemma}\label{quasiid}
Write $U^{\Sigma}=\sum\limits_{u\in U} u\in KG$ and $\xi^{\Sigma}:=\sum\limits_{v\in V} U^{\Sigma}\cdot v\cdot vf\in KG$. 

\begin{enumerate}
\item[\rm(1)] Suppose that $\car(K)$ does not divide $|U|$. 

Then $\spi{\sum\limits_{v\in V}U v\cdot vf}_{KG}=\spi{m_1\xi^{\Sigma}}_{KG}$.
\item[\rm(2)] We have \[\left(\xi^{\Sigma}\right)^2=\left(\sum\limits_{v\in V}\sum\limits_{u\in U}\lm_{vu}\cdot vf\right) \xi^{\Sigma}\] with $\lambda_{vu}\in K$ as in Lemma \ref{LemGI7}. 
\item[\rm(3)] Suppose that $V\le \NN_G(U)$. Then \[\left(\xi^{\Sigma}\right)^2=|V|\cdot |U|\cdot \xi^{\Sigma}~.\] In particular, if  $\car(K)$ divides neither $|U|$ nor $|V|$, the element $\xi^{\Sigma}$ is a quasi-idempotent element in $KG$.
\end{enumerate}
\end{Lemma}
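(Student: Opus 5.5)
For (1), note that $m_1 U^{\Sigma} = \sum_{u\in U} Uu = |U|\cdot m_1$. Hence
\[
m_1\xi^{\Sigma} \;=\; \sum_{v\in V} m_1U^{\Sigma}v\cdot vf \;=\; |U|\cdot\sum_{v\in V}Uv\cdot vf .
\]
Since $|U|$ is invertible in $K$, the two generators differ by a unit scalar, so they generate the same $KG$-submodule.

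For (2), expand the square. First write
\[
(\xi^{\Sigma})^2 \;=\; \sum_{v\in V}\sum_{u\in U} U^{\Sigma} v u \cdot vf\cdot \Bigl(\sum_{v_0\in V} U^{\Sigma}v_0\cdot v_0f\Bigr).
\]
The aim is an identity in $KG$ modelled on the Scalar Lemma. For each $h\in G$ we want
\[
U^{\Sigma}h\sum_{v_0\in V}U^{\Sigma}v_0\cdot v_0f \;=\; \lm_h\cdot \xi^{\Sigma},
\]
which makes sense because (J\,3) gives $T'=G$, so $\lm_h$ is defined for all $h$. Applying this with $h=vu$, multiplying by $vf$ and summing gives (2) directly. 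To prove the identity, note that right multiplication by $U^{\Sigma}$ does not matter on the left factor. The element $x_h:=h\sum_{v_0}U^{\Sigma}v_0\cdot v_0f$ can be rewritten as $\sum_{u'\in U}\sum_{v_0} hu'v_0\cdot v_0f$, a sum over $u'$ of the Scalar Lemma elements for the group elements $hu'$. This is awkward, so I would rather argue on the group-algebra level directly.

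The group-algebra statement is $U^{\Sigma}\,h\,U^{\Sigma}\,\sum_{v_0}v_0\cdot v_0f = \lm_h\,|U|\,$-type multiples. More cleanly, consider the $K$-linear map $M_K\to KG$, $Ug\mapsto U^{\Sigma}g$. It is well defined and $G$-equivariant, since $U^{\Sigma}ug=U^{\Sigma}g$. Apply it to the Scalar Lemma identity
\[
\sum_{v_0}Uhv_0\cdot v_0f=\lm_h\sum_{v_0}Uv_0\cdot v_0f
\]
with $h$ replaced by $hu'$; this uses $hu'\in G=T'$. It gives
\[
U^{\Sigma}hu'\sum_{v_0}v_0\cdot v_0f=\lm_{hu'}\,\xi^{\Sigma}.
\]
Now take $h=vu$ and $u'=u_1$, with $U^{\Sigma}=\sum_{u_1}u_1$, and expand $(\xi^{\Sigma})^2=\sum_{v,u}vf\cdot U^{\Sigma}vu\sum_{v_0}v_0\cdot v_0f$. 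The terms become $\lm_{vu}\cdot vf\cdot\xi^{\Sigma}$, and the formula follows. The main care needed is bookkeeping: the index $vu$ in the stated formula is exactly the element $vu$ appearing after writing $\xi^{\Sigma}U^{\Sigma}$. So I would expand $(\xi^{\Sigma})^2=\sum_v U^{\Sigma}v\cdot vf\cdot U^{\Sigma}\sum_{v_0}v_0\cdot v_0 f$ and expand only the middle $U^{\Sigma}$.

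For (3), the hypothesis $V\le\NN_G(U)$ gives $vu=u'v$ with $u'=vuv^{-1}\in U$, so $vu\in UV$. By definition $\lm_{vu}=v^{-1}f$, and Lemma \ref{RemGI5_5}(3) guarantees this is well defined. Hence the scalar is
\[
\sum_{v\in V}\sum_{u\in U}v^{-1}f\cdot vf=|V|\cdot|U|,
\]
and the claimed formula follows. If $\car(K)$ divides neither $|U|$ nor $|V|$, this scalar is nonzero. It remains to see $\xi^{\Sigma}\neq0$. The coefficient of $1$ in $\xi^{\Sigma}$ is $\sum_{c\in U\cap V}cf=|U\cap V|$, which is nonzero because $|U\cap V|$ divides $|U|$. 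Therefore $\xi^{\Sigma}$ is a quasi-idempotent. I expect no real obstacle beyond the correct transfer of the Scalar Lemma from $M_K$ to $KG$ via $Ug\mapsto U^{\Sigma}g$.
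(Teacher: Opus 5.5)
Your final argument is correct and is the paper's own. You push the Scalar Lemma into $KG$ via the $KG$-linear map $Ug\mapsto U^{\Sigma}g$, which the paper uses tacitly. You then expand only the middle $U^{\Sigma}$ in $(\xi^{\Sigma})^2=\sum_{v}U^{\Sigma}v\cdot vf\cdot U^{\Sigma}\sum_{v_0}v_0\cdot v_0f$ and apply the transferred identity with $h=vu$. In (3) you use $vu\in UV$, so $\lambda_{vu}=v^{-1}f$. Your check that $\xi^{\Sigma}\neq 0$, via the coefficient $|U\cap V|$ of $1$, is a small addition the paper omits.

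Do delete the abandoned drafts, though:
\begin{itemize}
\item Your first display for $(\xi^{\Sigma})^2$ actually equals $|U|(\xi^{\Sigma})^2$.
\item The ``wanted'' identity $U^{\Sigma}h\,\xi^{\Sigma}=\lambda_h\xi^{\Sigma}$ is false in general. Its left side is $\sum_{u'\in U}\lambda_{hu'}\,\xi^{\Sigma}$, and already $U^{\Sigma}\xi^{\Sigma}=|U|\,\xi^{\Sigma}$.
\item The final substitution should be $h=v$, $u'=u$ rather than $h=vu$, $u'=u_1$.
\end{itemize}
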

{\it Proof.}
{\it Ad }(1). We have $\spi{\sum\limits_{v\in V}Uv\cdot vf}_{KG}=\spi{m_1\cdot|U|\sum\limits_{v\in V}v\cdot vf}_{KG}=\spi{m_1\sum\limits_{v\in V}U^{\Sigma}\cdot v\cdot vf}_{KG}$.

{\it Ad }(2). By the Scalar Lemma, Lemma \ref{LemGI7} we have \[
\sum\limits_{v\in V}U^{\Sigma}\cdot hv\cdot vf=\lm_h\left(\sum\limits_{v\in V}U^{\Sigma}\cdot v\cdot vf\right)
\] for $h\in G$.

Then, using Lemma \ref{LemGI7} with $h=vu$, we have  \[\begin{array}{rclcl}
\left(\xi^{\Sigma}\right)^2&=& \left(\sum\limits_{v\in V}U^{\Sigma}\cdot v \cdot vf\right)\cdot\left(\sum\limits_{v'\in V} U^{\Sigma}\cdot v' \cdot v'f\right)
     &=&  \sum\limits_{v\in V}\sum\limits_{v'\in V}\sum\limits_{u\in U} U^{\Sigma}\cdot  vu v' \cdot v'f\cdot vf\\[3mm]
     &=&\sum\limits_{v\in V}\sum\limits_{u\in U}\left(\sum\limits_{v'\in V} U^{\Sigma}\cdot  vu v' \cdot v'f\right)\cdot vf
     &\stackrel{\rm L.\ref{LemGI7}}{=}&\sum\limits_{v\in V}\sum\limits_{u\in U}\left(\lm_{vu}\sum\limits_{v'\in V} U^{\Sigma}\cdot  v' \cdot v'f\right)\cdot vf\\[3mm]
     &=&\left(\sum\limits_{v\in V}\sum\limits_{u\in U}\lm_{vu}\cdot vf\right) \sum\limits_{v'\in V} U^{\Sigma}\cdot  v' \cdot v'f
     &=&\left(\sum\limits_{v\in V}\sum\limits_{u\in U}\lm_{vu}\cdot vf\right) \xi^{\Sigma}~.\end{array}\]

{\it Ad }(3). As $V\in \NN_G(U)$ we have $vu=u^{v^{-1}}\cdot v$ with $u^{v^{-1}}\in U$ for $u\in U$, $v\in V$. Therefore,
\[\begin{array}{rclcl}
\left(\xi^{\Sigma}\right)^2&\stackrel{\rm(2)}& \left(\sum\limits_{v\in V}\sum\limits_{u\in U}\lm_{vu}\cdot vf\right) \xi^{\Sigma}
                           &=&\left(\sum\limits_{v\in V}\sum\limits_{u\in U}\lm_{u^{v^{-1}}\cdot v}\cdot vf\right) \xi^{\Sigma}\\[3mm]
                            &\stackrel{\rm L.\ref{LemGI7}}{=}&\left(\sum\limits_{v\in V}\sum\limits_{u\in U}v^{-1}f\cdot vf\right) \xi^{\Sigma}
                            &=&|V|\cdot |U|\cdot \xi^{\Sigma}~.\hfill \qed\end{array}\]
\bq The following proposition is due to {\sc James}, adapted to the present context, cf.\mbox{ \cite[Theorem~4.8]{Jam}.}\eq
\begin{Proposition}\label{simple} Recall the non-degenerate \mbox{$G$-invariant} sesquilinear form $[-,=]_K$ on $M_K$, cf.\ Definition \ref{ses}. Let $X\subseteq M_K$ be a $K G$-submodule. Then 
	\[S^{\mathcal{T},\, f}_K\subseteq X \quad \text{or} \quad X\subseteq {\left(S^{\mathcal{T},\, f}_K\right)}^{\bot}~.\]
\end{Proposition}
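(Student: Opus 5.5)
The plan is to follow James's argument for the submodule theorem. The key input is the Scalar Lemma, Lemma \ref{LemGI7}: since $\mathcal{T}$ is a James triple, (J\,3) gives $T'_{(U,V,N)}=G$, so the lemma applies to every $h\in G$. Write $e:=\sum_{v\in V}v\cdot vf\in KG$. Then $S_K=\spi{m_1e}_{KG}$, and the Scalar Lemma says $m_1 h e=\lm_h\, m_1 e$ for all $h\in G$. By linearity in $h$ this gives $m_1 e$-multiples for everything: for any $x\in M_K$ we have $x e\in K\cdot m_1 e$. Indeed, writing $x=\sum_{g} m_1 g\,\nu_g$, we get $xe=\bigl(\sum_g \nu_g\lm_g\bigr)m_1e$.

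Next I would relate the form to $e$ through an adjointness identity. Put $e^*:=\sum_{v\in V}v^{-1}\cdot\overline{vf}$. By condition (1) of Definition \ref{DefGI1}, $\overline{vf}=v^{-1}f$, so substituting $v\mapsto v^{-1}$ gives $e^*=\sum_{v\in V} v\cdot vf=e$. Since the form is $G$-invariant and sesquilinear, for $x,y\in M_K$ and $g\in G$ we have $[x g,y]=[x,yg^{-1}]$. Hence $[x\,\sum_v v\lambda_v,\,y]=[x,\,y\sum_v v^{-1}\overline{\lambda_v}]$, which specialises to
\[
[xe,y]=[x,ye]\qquad\text{for all } x,y\in M_K.
\]
This adjointness is the one step I need to check carefully: it requires the correct side conventions for the sesquilinear form, namely linearity in the first argument and $\alpha_K$-semilinearity in the second. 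Apart from that, the step is routine.

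Now let $X\le M_K$ be a $KG$-submodule and split into two cases. First suppose there is some $x\in X$ with $xe\neq 0$. Then $xe=\mu\, m_1e$ with $\mu\ne0$, so $m_1e=\mu^{-1}xe\in X$, since $X$ is a $KG$-module. Therefore $S_K=\spi{m_1e}_{KG}\subseteq X$. Otherwise $xe=0$ for every $x\in X$. Then for $x\in X$ and $g\in G$ we have $xg^{-1}\in X$, and so
\[
[x,\,m_1e\,g]=[xg^{-1},\,m_1e]=[xg^{-1},\,m_1\cdot e]=[xg^{-1}e,\,m_1]=0 .
\]
Here we used the adjointness, $[ye,m_1]=[y,m_1e]$. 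Since the elements $m_1eg$ span $S_K$ over $K$, it follows that $X\subseteq S_K^\bot$.

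The main obstacle is really just the bookkeeping. One must verify that the Scalar Lemma's identity $\sum_v Uhv\cdot vf=\lm_h\sum_v Uv\cdot vf$ is exactly $m_1he=\lm_h m_1e$; this is immediate. One must also verify that the self-adjointness of $e$ uses precisely condition (1) of the fitting definition.
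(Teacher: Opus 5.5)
Your proof is correct and follows essentially the same argument as the paper. The paper also applies the Scalar Lemma for all $h\in G$ to see that $x\,g^{-1}eg=\sum_{v\in V}x\,v^g\,vf$ is a scalar multiple of $m_1 e g$; your use of the $G$-stability of $X$ simply reduces this to $g=1$. The paper then moves the signed sum across the form using condition (1) of Definition \ref{DefGI1}, as you do.

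One small correction concerns your intermediate formula. For general coefficients the adjoint of $\sum_v v\lambda_v$ is $\sum_v v^{-1}(\lambda_v\alpha_K^{-1})$, not $\sum_v v^{-1}\overline{\lambda_v}$, and these differ when $\alpha_K^2\neq\mathrm{id}$. For $\lambda_v=vf$ the two coincide, because (1) gives $\overline{\overline{vf}}=vf$. So the identity $[xe,y]=[x,ye]$ you actually use is valid.
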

{\it Proof.}
	{\it Case 1:}	Suppose that there exists $x\in X$ and $g\in G$ such that\[0\neq \sum\limits_{v\in V}x\cdot v^g\cdot vf\in X~.\]
	
	Write $x=\sum\limits_{h\in G}a_hUh$ with $a_h\in K$. Then \[\begin{array}{rclcl}
	0&\neq& \sum\limits_{v\in V}x\cdot v^g\cdot vf
	 &=&\left(\sum\limits_{v\in V}x\cdot g^{-1} v\cdot vf\right) g\\[3mm]
	 &=&\left(\sum\limits_{h\in G}a_hUhg^{-1}\sum\limits_{v\in V} v\cdot vf\right) g
	 &=&\left(\sum\limits_{\tilde h\in G}a_{\tilde h g}\sum\limits_{v\in V}U\tilde h v\cdot vf\right) g\\[4mm]
	 &\stackrel{\rm L.\ref{LemGI7}}{=}&\left(\sum\limits_{\tilde h\in G}a_{\tilde h g}\lambda_{\tilde h}\sum\limits_{v\in V}U v\cdot vf\right) g~,
	\end{array}\] for certain $\lambda_{\tilde h}\in K$, as in Lemma \ref{LemGI7}.
     
	Hence, there exists $\lm\in K^\ti$ such that $\sum\limits_{v\in V}x\cdot v^g\cdot vf = \lm\sum\limits_{v\in V}{m_1\cdot v\cdot vf} \cdot g$.
	
	 So, in this case $S_K=\langle \sum\limits_{v\in V} m_1\cdot v\cdot vf \rangle_{KG}=\langle \sum\limits_{v\in V} m_1\cdot v\cdot vf \cdot g \rangle _{KG}=\langle \sum\limits_{v\in V}x\cdot v^g\cdot vf \rangle _{KG}\subseteq X$.
	
	{\it Case 2:}	Suppose that \[\sum\limits_{v\in V}x\cdot v^g\cdot vf=0\]for $x\in X$ and $g\in G$.
	
	We show that $X\subseteq S_K^{\bot}$, i.e.\ that $[X,S_K]=0$. Let $x\in X$ and $g\in G$. 
	
	We have \[\begin{array}{rllclcl}
	&&[x,\left(\sum\limits_{v\in V}m_1\cdot v\cdot vf\right)g]&=&[x,\sum\limits_{v\in V}m_1\cdot g\cdot g^{-1}\cdot v\cdot g\cdot vf]\\
	&=&	\sum\limits_{v\in V}[x,m_1\cdot g\cdot g^{-1}\cdot v\cdot g]\overline{vf}
	&=&	\sum\limits_{v\in V}[x\cdot (g^{-1}\cdot v\cdot g)^{-1},m_1\cdot g]\overline{vf}\\
	&=& [\sum\limits_{v\in V}x\cdot (g^{-1}v \cdot g)^{-1}\cdot \overline{vf},m_1\cdot g]
	&\stackrel{\rm D. \ref{DefGI1}.(1)}{=}& [\sum\limits_{v\in V}x\cdot (g^{-1}\cdot v \cdot g)^{-1}\cdot (v^{-1})f,m_1\cdot g]\\
	&=&[\sum\limits_{v\in V}x\cdot g^{-1}\cdot v^{-1}\cdot g\cdot (v^{-1})f,m_1\cdot g]
	&=&[\sum\limits_{v\in V}x\cdot v^g\cdot vf,m_1\cdot g]	\\
	&=&[0,m_1\cdot g]
	&=&0~.\hfill \qed
	\end{array}\]
\begin{Corollary}\label{irred}
	The $KG$-module \[S^{\mathcal{T},\, f}_K/(S^{\mathcal{T},\, f}_K\cap {\left(S^{\mathcal{T},\, f}_K\right)}^\bot)\] is irreducible or zero.
\end{Corollary}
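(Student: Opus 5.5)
The plan is to deduce the corollary directly from Proposition~\ref{simple}, in the same way as James's argument for Specht modules. Abbreviate $S_K = S^{\mathcal{T},\,f}_K$ and $Q := S_K/(S_K\cap S_K^\bot)$. Since $[-,=]$ is $G$-invariant, $S_K^\bot$ is a $KG$-submodule, as noted after Definition~\ref{Def_S_K}. Hence $S_K\cap S_K^\bot$ is a $KG$-submodule of $S_K$, and $Q$ is a well-defined $KG$-module.

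Assume $Q\neq 0$. I will show that every $KG$-submodule of $Q$ is either $0$ or $Q$. By the correspondence theorem, a submodule of $Q$ has the form $X/(S_K\cap S_K^\bot)$ for a $KG$-submodule $X$ with $S_K\cap S_K^\bot\subseteq X\subseteq S_K\subseteq M_K$. Applying Proposition~\ref{simple} to $X\subseteq M_K$ gives two cases:
\begin{itemize}
\item either $S_K\subseteq X$, so $X=S_K$ and the submodule is all of $Q$;
\item or $X\subseteq S_K^\bot$, so $X\subseteq S_K\cap S_K^\bot$, hence $X=S_K\cap S_K^\bot$ and the submodule is $0$.
\end{itemize}
Therefore $Q$ is either zero or has exactly the two submodules $0$ and $Q$, which means it is irreducible.

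No step here is a real obstacle. All the work is in Proposition~\ref{simple}, which rests on the Scalar Lemma~\ref{LemGI7} and hence on (J\,3). The only points to check are that $S_K^\bot$ is a submodule (from $G$-invariance) and that $Q\neq 0$ is handled separately. This gives irreducibility only.

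For absolute irreducibility, as in the Theorem of the introduction, I would extend scalars to an algebraic closure $L$ of $K$, or to any extension field, with an extension $\alpha_L$ of $\alpha_K$. By Lemma~\ref{fitL}(1), $f$ still fits $\mathcal{T}$ over $L$, so the corollary applies to $S_L$. The remaining steps are:
\begin{itemize}
\item identify $S_L = S_K\otimes_K L$ inside $M_L = M_K\otimes_K L$, which holds because both are generated by the same vector;
\item identify $S_L^\bot = S_K^\bot\otimes_K L$, since the form is defined over $K$ on the permutation basis and the orthogonal complement is the solution space of a linear system with coefficients in $K$.
\end{itemize}
Together these give $S_L/(S_L\cap S_L^\bot)\cong Q\otimes_K L$, and the corollary over $L$ then yields absolute irreducibility. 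The second identification is the step that would need the most care.
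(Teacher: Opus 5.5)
Your argument is correct and is the paper's proof: by the correspondence theorem every submodule of the quotient has the form $X/(S_K\cap S_K^\bot)$, and Proposition~\ref{simple} forces $X=S_K$ or $X=S_K\cap S_K^\bot$. Your sketch of the absolute-irreducibility step also matches the proof of Theorem~\ref{T1}, where the paper compresses your identification $S_{\hat{K}}^\bot=\hat{K}\otimes_K S_K^\bot$ into the remark that tensoring with $\hat{K}$ is exact and compatible with intersections.
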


{\it Proof.}
	Every submodule of $S_K/(S_K\cap S_K^\bot)$ is of the form $X/(S_K\cap S_K^\bot)$ for a submodule $X\subseteq M_K$ with $S_K\cap S_K^\bot\subseteq X\subseteq S_K$. We show that $X=S_K\cap S_K^\bot$ or $X=S_K$.
		By Proposition \ref{simple} we have \[S_K\subseteq X \text{ or }X\subseteq S_K^\bot~.\]
	If $S_K\subseteq X$, then $X=S_K$. If $X\subseteq S_K^\bot$ then $X\subseteq S_K\cap S_K^\bot$, hence $X=S_K\cap S_K^\bot$ .
\qed

\begin{Theorem}\label{T1} Recall the James triple $\mathcal{T}=(U,V,N)$ in $G$ of index $k$ and the group morphism $f : V\to\upmu(K)$  fitting $\mathcal{T}$ with respect to the automorphism $\al_K:K\xrightarrow{\sim} K, x\mapsto\overline{x}$.

	The $KG$-module $$S^{\mathcal{T},\, f}_K/(S^{\mathcal{T},\, f}_K\cap {\left(S^{\mathcal{T},\, f}_K\right)}^{\bot})$$ is absolutely irreducible or zero.
\end{Theorem}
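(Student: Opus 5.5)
The plan is to deduce absolute irreducibility from Corollary \ref{irred}, applied over an algebraic closure, together with the compatibility of the whole construction with extension of scalars. Let $\bar K$ be an algebraic closure of $K$. First I would extend $\al_K$ to an automorphism $\al_{\bar K}$ of $\bar K$. This is possible because $\bar K$ is also an algebraic closure of $K$ viewed through $\al_K$, so the embedding $K\xrightarrow{\al_K}K\subseteq\bar K$ extends to an embedding $\bar K\to\bar K$. That embedding is surjective, since an algebraic self-embedding of an algebraically closed field algebraic over the image of $K$ is onto. By Lemma \ref{fitL}.(1), the morphism $\w f:V\to\upmu(\bar K)$ then fits $\mathcal{T}$ with respect to $\al_{\bar K}$. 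So Corollary \ref{irred} applies over $\bar K$: the module $S_{\bar K}/(S_{\bar K}\cap S_{\bar K}^\bot)$ is irreducible or zero.

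Second, I would identify $M_{\bar K}=\bar K\otimes_K M_K$. Under this identification the sesquilinear form $[-,=]_{\bar K}$ restricts to $[-,=]_K$ on $M_K$, because both are given by the Kronecker delta on the basis $U\< G$. The generator $\sum_{v\in V}Uv\cdot vf$ is the same element in both modules, so $S_{\bar K}=\bar K\otimes_K S_K$ as the $\bar K G$-span.

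The main step, and the one needing care, is to show that $S_{\bar K}\cap S_{\bar K}^\bot=\bar K\otimes_K(S_K\cap S_K^\bot)$. Choose a $K$-basis $s_1,\dots,s_r$ of $S_K$. An element $x\in S_{\bar K}$ lies in $S_{\bar K}^\bot$ if and only if $[x,s_j]_{\bar K}=0$ for all $j$. This holds because $S_{\bar K}$ is $\bar K$-spanned by the $s_j$ and the form is semilinear in the second argument, so vanishing on the $s_j$ gives vanishing on all of $S_{\bar K}$. Now consider the $K$-linear map $\phi:S_K\to K^r$, $x\mapsto([x,s_j]_K)_j$. Its kernel is $S_K\cap S_K^\bot$. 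The $\bar K$-linear map $\bar K\otimes\phi$ is exactly $x\mapsto([x,s_j]_{\bar K})_j$, since the form is linear in the first argument. Kernels commute with the flat base change $\bar K\otimes_K-$, which gives the claim. Consequently
\[
S_{\bar K}/(S_{\bar K}\cap S_{\bar K}^\bot)\;\cong\;\bar K\otimes_K\bigl(S_K/(S_K\cap S_K^\bot)\bigr).
\]

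Finally, set $Q:=S_K/(S_K\cap S_K^\bot)$. If $Q=0$ there is nothing to show. If $Q\neq0$, then $\bar K\otimes_K Q$ is nonzero and, by the first step, irreducible. So $Q$ remains irreducible over an algebraic closure, that is, $Q$ is absolutely irreducible.
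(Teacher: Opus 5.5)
Your proof is correct and follows the same route as the paper: extend $\al_K$ to an automorphism of an algebraic closure $\bar K$, apply Corollary~\ref{irred} over $\bar K$, and identify $\bar K\otimes_K(S_K/(S_K\cap S_K^\bot))$ with $S_{\bar K}/(S_{\bar K}\cap S_{\bar K}^\bot)$. Where the paper cites Bourbaki or leaves steps implicit, you supply the details: the extension of $\al_K$, the use of Lemma~\ref{fitL}.(1), and the kernel argument giving $S_{\bar K}\cap S_{\bar K}^\bot=\bar K\otimes_K(S_K\cap S_K^\bot)$, which the paper summarizes as ``tensoring is exact and compatible with intersections''.
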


{\it Proof.}
	Let $\hat{K}|K$ be an algebraic closure of $K$. By \cite[Chapitre V, \S 9, Proposition 1]{Bou} there exists $\alpha_{\hat{K}}\in\Aut(\hat{K})$ such that $\al_{\hat{K}}|_K^K=\al_K$. Consider the $\hat{K} G$-module $M_{\hat{K}}:=\hat{K}(U\backslash G)=\hat{K}\otimes_K K(U\backslash G)$.
	We have \[S_{\hat{K}}= \spi{\sum\limits_{v\in V}Uv\cdot vf}_{\hat{K}G}\subseteq M_{\hat{K}}~.\]

	We need to show that $\hat{K}\otimes_K (S_K/(S_K\cap S_K^{\bot}))$ is irreducible or zero.
	
	As tensoring with $\hat{K}$ is exact and compatible with intersections, we have a $\hat{K}G$-module isomorphism \[\begin{array}{rcl}\hat{K}\otimes_K \left(S_K/(S_K\cap S_K^{\bot})\right)&\xrightarrow[\sim]{\nu}& S_{\hat{K}}/(S_{\hat{K}}\cap S_{\hat{K}}^{\bot})\\
	\ell\otimes \left(s+(S_K\cap S_K^{\bot})\right) &\mapsto& \ell\otimes s+(S_{\hat{K}}\cap S_{\hat{K}}^{\bot})~.\end{array}\] 
		
	Applying Corollary \ref{irred} yields that the $\hat{K}G$-module $S_{\hat{K}}/(S_{\hat{K}}\cap S_{\hat{K}}^{\bot})$ is irreducible or zero.
	Hence, $\hat{K}\otimes_K (S_K/(S_K\cap S_K^{\bot}))$ is irreducible or zero. 
	
	This shows that the $KG$-module $S_K/(S_K\cap S_K^{\bot})$ is absolutely irreducible or zero. \qed

\begin{Remark}\label{irred4}\rm
Suppose that $K|\mathbb{Q}$ is a subfield of $\mathbb{C}$ that is closed under the complex conjugation $\alpha_{\mathbb{C}}$, i.e.\ $\kappa\alpha_{\mathbb{C}}\in K$ for $\kappa\in K$. Let $\alpha_K=\alpha_\C|^{K}_K$. 

Then \[S^{\mathcal{T},\, f}_K\cap {\left(S_K^{\mathcal{T},\, f}\right)}^\bot=0~,\]
as $[-,=]_K : M_K\times M_K\to K$ is positive definite, cf.\ Remark \ref{possesMK}. 

So $S^{\mathcal{T},\, f}_K$ is absolutely irreducible, cf.\ Lemma \ref{zero}, Theorem \ref{T1}.
\end{Remark}

\section{Examples}

\subsection{\texorpdfstring{James triples in case of $G=\SSS_n$}
{James triples in case of G = Sₙ}}

\bq The construction of the Specht modules of $\SSS_n$ \cite{Jam} can be phrased in terms of James triples. In fact, James triples have been modelled on that case.\eq

Let $n,r\in\Z_{\ge 2}$ and $G=\SSS_n$. Recall that $K$ is a field and $\al_K\in\Aut(K)$.

Let $\mu=(\mu_1,\mu_2,\mu_3,\dots,\mu_r)$ be a partition of $n$ with $r$ parts, written as $\mu\vdash n$.  Let $t$ be a $\mu$-tableau.
Let $U_t\le\SSS_n$ be the row stabiliser of $t$. Let $V_t\le\SSS_n$ be the column stabiliser of $t$. Let $\kappa_t:=\sum\limits_{v\in V_t} v\cdot v\sgn\in K\SSS_n$ be the {\it signed column sum}.

\begin{Definition}\rm
Let $M_K^\mu$ be the $K\SSS_n$-permutation module with basis $\{\{t\}:\, \text{ $t$ is a $\mu$-tabloid}\}$ \cite[4.1]{Jam}.

The Specht module $S^\mu_K$ is the $K\SSS_n$-submodule of $M^\mu_K$ generated by the $\mu$-polytabloid $e_t:=\{t\}\kappa_t$ for a $\mu$-tableau $t$ \cite[4.5]{Jam}. The resulting submodule is independent of the choice of the $\mu$-tableau $t$.
\end{Definition}

We shall now explain the connection of the construction via James triples to James's original construction.

\begin{Remark}\label{isoJ}
Suppose given a $\mu$-tableau $t$. We have the isomorphism of $K\SSS_n$-modules

\[\begin{array}{rcl} M_K=K(U_t\backslash\SSS_n)&\xraiso{\phi_t}& M_K^\mu \\
                         U_t\cdot h        &\mapsto& \{t\}\cdot h~, \text{ where $h\in\SSS_n$}  
\end{array}\]
\end{Remark}

\begin{Lemma}\label{S-trip}\rm
Let $t$ be a $\mu$-tableau. Let $U:=U_t\le\SSS_n$ be the row stabiliser of $t$. Let $V:=V_t\le\SSS_n$ be the column stabiliser of $t$.
Let $f=\signum|_V: V\to \{-1,1\}:\, v\mapsto v\signum$. Let $N:=\ker(f)$, so elements of $N$ are the even permutations within the column stabiliser~$V$.

Then: 

\begin{enumerate}
\item[(1)] The triple $\mathcal{T}=(U,V,N)$ is a James triple of index $2$.
\item[(2)] We have the isomorphism of $K\SSS_n$-modules
          \[\begin{array}{rcl} S_K^{\mathcal{T},\,f}&\to& S^\mu_K \\
                                      \sum\limits_{v\in V} Uv\cdot vf     &\mapsto& e_t~.\end{array}\]
\end{enumerate}
\end{Lemma}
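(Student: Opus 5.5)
For (1), conditions (J\,1) and (J\,2) are immediate. $U\cap V=1$, since a permutation that fixes both the rows and the columns of $t$ setwise fixes every entry. So $U\cap V\le N$. Also $N=\ker(\signum|_V)$ is normal in $V$. Its index is $2$ as soon as $V$ contains a transposition, i.e.\ as soon as some column of $t$ has length at least $2$. This holds when $\mu\ne(n)$, which I will assume, since the index of a triple must be at least $2$.

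For (J\,3) I will use Lemma~\ref{RemGI4}. It suffices to show that $N\cap U^h\nrme V\cap U^h$ for $h\in\SSS_n\ohne(UV)$. Index $2$ in a subgroup gives normality, so I only need $N\cap U^h\ne V\cap U^h$. In other words, $V\cap U^h$ should contain an odd permutation. Now $U^h=h^{-1}U_th=U_{th}$ is the row stabiliser of the tableau $th$. The key input is James's basic combinatorial lemma (cf.\ \cite[Lemma~4.6]{Jam}), applied to the tableaux $t$ and $th$.

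The lemma says the following. If no two numbers lying in the same row of $th$ lie in the same column of $t$, then there exist $u\in U_t$ and $v\in V_t$ with $\{th\}=\{tvu\}$-type relations. Concretely, $h\in U_tV_t$ up to the order convention, which I would check against the paper's right-action conventions. Hence, for $h\notin UV$, there are two numbers $a,b$ in the same row of $th$ and in the same column of $t$. The transposition $(a\,b)$ then lies in $U_{th}\cap V_t=U^h\cap V$ and is odd. This gives (J\,3), and (1) follows.

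The main obstacle is matching conventions. I must make sure that James's lemma produces the double coset $UV$ in this order, and not $VU$, with the paper's right actions. If it comes out as $VU$, I would apply the lemma with the roles of $t$ and $th$ swapped (use $th^{-1}$), and pass between the two orders by inverting elements.

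For (2), I apply the isomorphism $\phi_t$ of Remark~\ref{isoJ}. It sends
$$\sum_{v\in V}Uv\cdot vf\;\mapsto\;\sum_{v\in V}\{t\}v\cdot v\signum=\{t\}\kappa_t=e_t.$$
Since $\phi_t$ is a $K\SSS_n$-isomorphism, it maps the submodule generated by $\sum_{v\in V}Uv\cdot vf$ onto the submodule generated by $e_t$. That is, it maps $S_K^{\mathcal T,f}$ isomorphically onto $S_K^\mu$.

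Finally, I check that $f$ fits $\mathcal T$ with respect to $\al_K$. Condition (1) of Definition~\ref{DefGI1} holds because $\pm1$ is fixed by any field automorphism and is its own inverse. For condition (2), $\ker f=N<V$. In characteristic $2$ the sign map is trivial, but variant (ii) allows this, so $N\le\ker(f)\le V$ still holds.
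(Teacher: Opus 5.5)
Your proposal is correct and follows the paper's proof essentially step for step. You verify (J\,1) and (J\,2) directly, obtain (J\,3) from Lemma~\ref{RemGI4} by producing a transposition in $V\cap U^h$ via James's combinatorial lemma, and get (2) by restricting $\phi_t$; your check that $f$ fits $\mathcal T$ is a welcome addition that the paper leaves implicit. The convention check you left open does come out as needed: the column-by-column straightening (which the paper sketches rather than cites) gives $\sigma\in V$ with $\{th\}=\{t\sigma\}$, hence $h\sigma^{-1}\in U$ and $h\in UV$.
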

{\it Proof.}
{\it Ad }(1). We have $U\cap V=1$, as only the element $1=\id_{\SSS_n}$ stabilizes the rows and columns of $t$ simultaneously. We have $|V/N|=2$, as $\mu$ has at least two parts, and therefore $t$ has at least two elements in the first column.
So, (J\,1,\,2) hold.

It remains to show that (J\,3) holds. Suppose given $h\in \SSS_n\ohne (UV)$. By Lemma \ref{RemGI4} it suffices to show that $N\cap U^h \nrme V\cap U^h$.

Since $N\nrme V$, we have $N\cap U^h\nrm V\cap U^h$. We need to show that there exist at least one element in $V\cap U^h$ of odd sign. It suffices to find an element of the form $(a,b)$ in $V\cap U^h$.

The group $U^h$ is the row stabiliser of the $\mu$-tableau $th$. 

{\it Assume:} Whenever two elements $a,b\in [1,n]$ are in the same column of $t$, they are in different rows of $th$.

We proceed as in \cite[3.7, 4.6]{Jam}, see also \cite[Lemma 63]{NK}.

Multiplying $h$ with an element $v_1\in V$, we achieve that in $thv_1$, each element of the first column of $t$ is in the same row in $thv_1$ as in $t$.
Multiplying $hv_1$ with an element $v_2\in V$, we achieve that in $thv_1v_2$, each element of the first two colums of $t$ is in the same row in $thv_1v_2$ as in $t$. Continuing this way, we obtain an element $v=v_1\cdot v_2\cdot{\dots}\cdot v_{\lambda_1}$ such that $hv\in U$. Hence $h\in UV$, \textit{contradicting} $h\not\in UV$.

All in all $N\cap U^h \nrme V\cap U^h$ for $h\in \SSS_n\ohne (UV)$. So $\mathcal{T}=(U,V,N)$ is a James triple of index~$2$.

{\it Ad }(2). 
We have $S_K^{\mathcal{T},\,f}=\langle  \sum\limits_{v\in V} Uv\cdot vf \rangle_{K\SSS_n}=\langle  U\kappa_t \rangle_{K\SSS_n}$ and
$(U\kappa_t)\phi_t=\{t\}\kappa_t=e_t$, cf.\ Lemma \ref{isoJ}. So, $\phi_t|_{S_K^{\mathcal{T},\,f}}^{S^\mu_K}:\, S_K^{\mathcal{T},\,f}\to S^\mu_K$ is surjective and thus an isomorphism.
\qed

\begin{Remark}\rm
Suppose that $K=\Q$. The Specht modules over $\Q$ give all the ordinary irreducible representations of $\SSS_n$, cf.\ \cite[Theorem 4.12]{Jam}.

So, by Lemma \ref{S-trip}, every ordinary irreducible representations of $\SSS_n$ can be realised via a James triple.
\end{Remark}

\subsection{Modules not generated by a James triple}
\begin{Example}\label{TabelleTripQuad}\rm
We consider the case $K=\Q$. We give some examples which show that there are nontrivial absolutely irreducible modules that can not be generated by a James triple. We have used the computer algebra system Magma \cite{Magma} and its SmallGroups library to obtain these examples.

Recall that if $G$ is of odd order, then $G$ has exactly one absolutely irreducible representation over $\Q$, namely the trivial module \cite[Ch.\ V, Theorem 13.8 (Burnside)]{Hupp}. 
So we could restrict our search to the case where $G$ is of even order.

The following table lists all groups $G$ of order $n$ at most $190$ for which at least one absolutely irreducible nontrivial $\Q G$-module could not be generated via a James triple. For each such group, every absolutely irreducible $\Q G$-module that could not be generated is also listed.  
The group is specified by its number $k$ in the SmallGroups-library of Magma and the Magma function GroupName. The module is specified by its character given by Magma, where the conjugacy classes are ordered as in \verb|CharacterTable(SmallGroup(n,k))|.

\[
{\scriptsize\hspace*{-12mm}\begin{array}{c|c|c|c|l}
\text{Order of $G$} & \text{SmallGroups} & \begin{array}{l}\text{GroupName(G)}\\\text{as in Magma }\end{array} & \text{Dim} & \text{Character}                    \\\hline
48                  & 17                                       & \text{Q}_8:\SSS_3          & 4 & ( 4, -4, 0, -2, 0, 0, 2, 0, 0, 0, 0, 0 )                                 \\\hline
54					&  6    								   & \DD_9:\CC_3		  & 6 & ( 6, 0, -3, 0, 0, 0, 0, 0, 0, 0 )                                 \\\hline
72                  & 23									   & \CC_3^2:\DD_4        & 4 & ( 4, -4, 0, 0, -2, -2, 1, 0, 2, 2, -1, 0, 0, 0, 0 )\\\hline
96                  & 15                                       & \CC_3:(\CC_8:\CC_4)    & 4 &  ( 4, -4, -4, 4, -2, 0, 0, 0, 0, 0, 0, -2, 2, 2, 0, 0, 0, 0, 0, 0, 0, 0, 0, 0 )                                                \\\hline
96                  & 16                                       &\CC_3:(\DD_4:\CC_4)     & 4 & ( 4, -4, -4, 4, 0, 0, -2, 0, 0, 0, 0, 2, 2, -2, 0, 0, 0, 0, 0, 0, 0, 0, 0, 0
    )\\\hline
96                  & 42                                       &(\CC_3*\text{Q}_8):\CC_4     & 4 &( 4, -4, 4, -4, -2, 0, 0, 0, 0, 0, 0, -2, 2, 2, 0, 0, 0, 0, 0, 0, 0, 0, 0, 0
    )
\\\hline
96                  &148                                       &\CC_2*\text{Q}_8:\SSS_3       & 4 & ( 4, -4, 4, -4, 0, 0, -2, 0, 0, 0, 0, 2, 2, -2, 0, 0, 0, 0, 0, 0, 0, 0, 0, 0
    )\\\hline
96                  &148                                       &\CC_2*\text{Q}_8:\SSS_3       & 4 &( 4, -4, -4, 4, 0, 0, -2, 0, 0, 0, 0, -2, 2, 2, 0, 0, 0, 0, 0, 0, 0, 0, 0, 0
    )
\\\hline
96                  &193                                       &\text{SL}(2,3):\CC_2:\CC_2    & 4 & ( 4, -4, 0, 0, 0, -2, 0, 0, 2, 0, 0, 0, 0 )\\\hline
96                  &201                                       &\text{Q}_8.A_4           & 4 & ( 4, -4, 0, 0, 0, -2, -2, 0, 0, 0, 0, 2, 2, 0, 0, 0, 0, 0, 0 )\\\hline
108                  &9                                       & \CC_{18}.\CC_6          & 6 & ( 6, 6, -3, 0, 0, 0, 0, -3, 0, 0, 0, 0, 0, 0, 0, 0, 0, 0, 0, 0 )\\\hline
108                  &26                                       & \CC_2*\DD_9:\CC_3          & 6 & ( 6, 6, 0, 0, -3, 0, 0, -3, 0, 0, 0, 0, 0, 0, 0, 0, 0, 0, 0, 0 )\\\hline
108                  &26                                       & \CC_2*\DD_9:\CC_3          & 6 &( 6, -6, 0, 0, -3, 0, 0, 3, 0, 0, 0, 0, 0, 0, 0, 0, 0, 0, 0, 0 ) \\\hline
144                &  18                                      &  \text{Q}_8:\DD_9       & 4 &( 4, -4, 0, 4, 0, 0, -4, 0, 0, -2, -2, -2, 0, 0, 0, 2, 2, 2, 0, 0, 0, 0, 0, 
    0, 0, 0, 0 ) \\\hline
144                &  57                                      &  \CC_3^2:\DD_8       & 4 &( 4, 4, 0, 0, -2, -2, 1, -4, -2, -2, 1, 0, 0, 0, 0, 2, 2, -1, -1, 2, 0, 0, 
    0, 0 ) \\\hline
144                &  58                                      &  \DD_{12}.\SSS_3       & 4 &( 4, -4, 0, -2, 4, -2, 0, 0, -4, 2, 2, 0, 0, 0, 0, 0, 0, 0, 0, 0, 0 ) \\\hline
144                &  59                                      &  \CC_4.\SSS_3^2      & 4 &( 4, 4, 0, -2, -2, 1, -4, 0, -2, -2, 1, 0, 0, 0, 0, 2, 2, -1, -1, 2, 0, 0,
    0, 0 )
 \\\hline
 144                &  60                                      &  \CC_3:(\CC_8:\SSS_3)      & 4 &( 4, -4, 0, -2, 4, -2, 0, 0, 2, -4, 2, 0, 0, 0, 0, 0, 0, 0, 0, 0, 0, 0, 0, 0
    )
 \\
                  &                                        &        & 4 &( 4, 4, 0, -2, -2, 1, -4, 0, -2, -2, 1, 0, 0, 2, 2, 2, -1, -1, 0, 0, 0, 0,
     0, 0 )
 \\\hline
  144                &  62                                      &  \CC_3:(\CC_{24}.\CC_2)     & 4 &( 4, 4, -2, -2, 1, -4, 0, 0, -2, -2, 1, 0, 0, 2, 2, 2, -1, -1, 0, 0, 0, 0, 
    0, 0 )
 \\\hline
 144                &  64                                      &  (\CC_6*\SSS_3):\CC_4      & 4 &( 4, -4, -4, 4, 0, 0, -2, -2, 1, 0, 0, 0, 0, -2, -2, 2, 2, 2, 2, -1, -1, 1, 
    0, 0, 0, 0, 0, 0, 0, 0 )
 \\\hline
 144                &  65                                      &  \CC_3:(\DD_6:\CC_4)      & 4 &( 4, -4, -4, 4, 0, 0, -2, -2, 1, 0, 0, 0, 0, 2, 2, -2, -2, 2, 2, -1, 1, -1,
    0, 0, 0, 0, 0, 0, 0, 0 ),\\ & & & &    ( 4, -4, 4, -4, 0, 0, -2, -2, 1, 0, 0, 0, 0, 2, -2, 2, 2, -2, 2, -1, -1, 1, 0, 0, 0, 0, 0, 0, 0, 0 ) \\\hline
 144                &  66                                      &  \CC_6.\DD_{12}     & 4 &( 4, 4, -4, -4, -2, -2, 1, 0, 0, 0, 0, 0, 0, 2, 2, 2, -2, -2, 2, -1, -1, 1, 
    0, 0, 0, 0, 0, 0, 0, 0 )
 \\\hline
 144                &  82                                      &  \CC_3*\text{Q}_8:\SSS_3     & 4 & ( 4, -4, 0, 4, 4, -2, -2, -2, 0, 0, -4, -4, 2, 2, 2, 0, 0, 0, 0, 0, 0, 0, 0,
    0, 0, 0, 0, 0, 0, 0, 0, 0, 0, 0, 0, 0 )
 \\\hline
 144                &  98                                      &  \CC_3:(\text{Q}_8:\SSS_3)     & 4 & ( 4, -4, 0, -2, -2, -2, 4, 0, 0, -4, 2, 2, 2, 0, 0, 0, 0, 0, 0, 0, 0, 0, 0, 0, 0, 0, 0 ),\\
  & & & &  ( 4, -4, 0, 4, -2, -2, -2, 0, 0, 2, -4, 2, 2, 0, 0, 0, 0, 0, 0, 0, 0, 0, 0, 0, 0, 0, 0 ),\\
    & & & &   ( 4, -4, 0, -2, 4, -2, -2, 0, 0, 2, 2, -4, 2, 0, 0, 0, 0, 0, 0, 0, 0, 0, 0, 0, 0, 0, 0 ),\\
     & & & &  ( 4, -4, 0, -2, -2, 4, -2, 0, 0, 2, 2, 2, -4, 0, 0, 0, 0, 0, 0, 0, 0, 0, 0, 0, 0, 0, 0 )
 \\\hline
  144                &  125                                      &  \CC_3:\text{GL}(2,3) &4& ( 4, -4, 0, -2, 1, 1, -2, 0, 2, 2, -1, -1, 0, 0, 0 ),\\
    & & & &    ( 4, -4, 0, -2, -2, 1, 1, 0, 2, -1, 2, -1, 0, 0, 0 ),\\
      & & & &  ( 4, -4, 0, -2, 1, -2, 1, 0, 2, -1, -1, 2, 0, 0, 0 )
 \\\hline
 144                &  151                                      &  \CC_2*\CC_3^2:\DD_4 &4& ( 4, 4, -4, -4, 0, 0, 0, 0, -2, -2, 1, 0, 0, -2, -2, 2, 2, 2, 2, -1, 1, -1, 
    0, 0, 0, 0, 0, 0, 0, 0 ),\\
    & & & &    ( 4, -4, 4, -4, 0, 0, 0, 0, -2, -2, 1, 0, 0, 2, 2, -2, -2, 2, 2, -1, -1, 1, 
    0, 0, 0, 0, 0, 0, 0, 0 ) \\\hline
 160                &  85                                      &  (\CC_5*\text{Q}_8):\CC_4     & 8 & ( 8, -8, 0, 0, 0, 0, 0, 0, 0, 0, 0, 0, -2, 0, 0, 2, 0, 0, 0 )
 \\\hline
 162                &  5                                      &  \CC_3:\DD_9:\CC_3     & 6 & ( 6, 0, -3, -3, 6, -3, 0, 0, 0, 0, 0, 0, 0, 0, 0, 0, 0, 0, 0, 0, 0 )\\
  & & & &  ( 6, 0, -3, 6, -3, -3, 0, 0, 0, 0, 0, 0, 0, 0, 0, 0, 0, 0, 0, 0, 0 )
 \\\hline
  162                &  6                                      &  \DD_9:\CC_9     & 6 & ( 6, 0, 6, 6, -3, -3, -3, 0, 0, 0, 0, 0, 0, 0, 0, 0, 0, 0, 0, 0, 0, 0, 0, 0,
    0, 0, 0, 0, 0, 0 )
 \\\hline
 162                &  36                                      &  \CC_3*\DD_9:\CC_3     & 6 & ( 6, 0, 6, 6, -3, -3, -3, 0, 0, 0, 0, 0, 0, 0, 0, 0, 0, 0, 0, 0, 0, 0, 0, 0,
    0, 0, 0, 0, 0, 0 )
 \\\hline
 162                &  42                                      &  \CC_9:(\CC_3*\SSS_3)     & 6 & ( 6, 0, -3, -3, 6, -3, 0, 0, 0, 0, 0, 0, 0, 0, 0, 0, 0, 0, 0, 0, 0 )\\
  & & & &  ( 6, 0, -3, 6, -3, -3, 0, 0, 0, 0, 0, 0, 0, 0, 0, 0, 0, 0, 0, 0, 0 )\\
  & & & &  ( 6, 0, 6, -3, -3, -3, 0, 0, 0, 0, 0, 0, 0, 0, 0, 0, 0, 0, 0, 0, 0 )
\end{array}}\]

Note that the groups $G$ of order $48$ have, in total, $356$ absolutely irreducible nontrivial $\Q G$-modules;
of order $54$ in total $48$;
of order $64$ in total $3229$;
of order $72$ in total $316$;
of order $96$ in total $2513$;
of order $108$ in total $231$;
of order $128$ in total $42256$;
of order $144$ in total $1973$;
of order $160$ in total $1774$;
of order $162$ in total $240$.
\end{Example}
\begin{Example}\label{TabelleTripQuad3}\rm
We consider the case of a finite group $G$ of order $n$, of a prime divisor $p$ of $n$ and of  $K=\Fu{p^k}$ for $k\in\{1,2,3,4\}$.
We have used the computer algebra system Magma \cite{Magma} and its SmallGroups library to obtain these examples.

We may restrict our search to the case where $G$ is not a $p$-group, cf.\ \mbox{\cite[Ch.\ V, Theorem 5.16]{Hupp}.}

The following table lists all group orders $\le 190$ for which at least one absolutely irreducible nontrivial $\Fu{p^k} G$-module could not be generated via a James triple. We tested for $k=2$ up to order $n=143$, for $k=3$ up to order $n=93$ and $k=4$ up to order $n=36$.

\[{\scriptsize\begin{array}{c|c|c|c||}
k&\shortstack{Group\\order} &
\shortstack{Number of nontrivial\ru{-0.7}\\absolutely irreducible\\modules in total} &
\shortstack{Number of those\ru{-0.7}\\ modules not generated\\by a James triple}
\\ \hline
\smatev{1}{2}{3}{4} & 10 & \smatev{2}{4}{2}{8}    & \smatev{0}{2}{0}{0} \\ \hline
\smatev{1}{2}{3}{4} & 12  & \smatev{11}{21}{11}{21} & \smatev{1}{0}{1}{0} \\ \hline
\smatev{1}{2}{3}{4} & 14 & \smatev{2}{2}{11}{2} & \smatev{0}{0}{9}{0} \\ \hline
\smatev{1}{2}{3}{4} & 18 & \smatev{11}{25}{14}{25} & \smatev{0}{0}{3}{0} \\ \hline
\smatev{1}{2}{3}{4} & 20 & \smatev{16}{20}{16}{28} & \smatev{6}{10}{6}{6} \\ \hline
\smatev{1}{2}{3}{4} & 21 & \smatev{4}{6}{4}{6} & \smatev{4}{6}{4}{6} \\ \hline
\smatev{1}{2}{3}{4} & 24 & \smatev{61}{99}{61}{99} & \smatev{6}{11}{6}{11} \\ \hline
\smatev{1}{2}{3}{4} & 28 & \smatev{8}{12}{26}{12} & \smatev{0}{0}{18}{0} \\ \hline
\smatev{1}{2}{3}{4} & 30 & \smatev{12}{36}{12}{76} & \smatev{0}{12}{0}{20} \\ \hline
\smatev{1}{2}{3}{4} & 34 & \smatev{2}{2}{2}{10} & \smatev{0}{0}{0}{8} \\ \hline
\smatev{1}{2}{3}{4} & 36 & \smatev{43}{93}{49}{93} & \smatev{6}{0}{12}{0} \\ \hline
\smated{1}{2}{3} & 39 & \smated{8}{8}{20} & \smated{8}{8}{20} \\ \hline
\smated{1}{2}{3} & 40 & \smated{73}{99}{73} & \smated{25}{51}{25} \\ \hline
\smated{1}{2}{3} & 42 & \smated{36}{48}{66} & \smated{18}{22}{48} \\ \hline
\smated{1}{2}{3} & 48 & \smated{320}{534}{320} & \smated{35}{75}{35} \\ \hline
\smated{1}{2}{3} & 50 & \smated{5}{21}{5} & \smated{0}{16}{0} \\ \hline
\smated{1}{2}{3} & 52 & \smated{15}{15}{18} & \smated{6}{6}{9} \\ \hline
\smated{1}{2}{3} & 54 & \smated{48}{148}{63} & \smated{1}{0}{16} \\ \hline
\smated{1}{2}{3} & 55 & \smated{10}{10}{10} & \smated{10}{10}{10} \\ \hline
\smated{1}{2}{3} & 56 & \smated{46}{70}{103} & \smated{2}{2}{59} \\ \hline
\smated{1}{2}{3} & 57 & \smated{4}{4}{4} & \smated{4}{4}{4} \\ \hline
\smated{1}{2}{3} & 60 & \smated{84}{196}{84} & \smated{20}{75}{20} \\ \hline
\smated{1}{2}{3} & 63 & \smated{20}{24}{32} & \smated{20}{24}{32} \\ \hline
\smated{1}{2}{3} & 68 & \smated{15}{19}{15} & \smated{6}{10}{6} \\ \hline
\smated{1}{2}{3} & 70 & \smated{8}{16}{32} & \smated{0}{8}{24} \\ \hline
\smated{1}{2}{3} & 72 & \smated{251}{443}{275} & \smated{35}{37}{59} \\ \hline
\smated{1}{2}{3} & 77 & \smated{0}{0}{6} & \smated{0}{0}{6} \\ \hline
\smated{1}{2}{3} & 78 & \smated{42}{52}{102} & \smated{26}{28}{86} \\ \hline
\smated{1}{2}{3} & 80 & \smated{447}{575}{447} & \smated{179}{307}{179} \\ \hline
\smated{1}{2}{3} & 84 & \smated{137}{215}{227} & \smated{64}{96}{154} \\ \hline
\smated{1}{2}{3} & 88 & \smated{45}{69}{45} & \smated{2}{10}{2} \\ \hline
\smated{1}{2}{3} & 90 & \smated{44}{168}{56} & \smated{0}{44}{12}\\ \hline
\smated{1}{2}{3} & 93 & \smated{4}{4}{4}    & \smated{4}{4}{4} \\ \hline
\smatez{1}{2} & 96  & \smatez{2197}{3671} & \smatez{228}{586} \\ \hline
\smatez{1}{2} & 100 & \smatez{59 }{105}  & \smatez{20}{66} \\ \hline
\smatez{1}{2} & 104 & \smatez{71}{83}   & \smatez{24}{36}
\end{array}\begin{array}{c|c|c|c}
k &
\shortstack{Group\\order} &
\shortstack{Number of nontrivial \ru{-0.7}\\absolutely irreducible\\modules in total} &
\shortstack{Number of those\ru{-0.7}\\ modules not generated\\by a James triple}
\\ \hline
\smatez{1}{2} & 105 & \smatez{4 }{12}   & \smatez{4}{8} \\\hline
\smatez{1}{2} & 108 & \smatez{183}{515}  & \smatez{35}{2} \\ \hline
\smatez{1}{2} & 110 & \smatez{54 }{60}  & \smatez{40}{46} \\ \hline
\smatez{1}{2} & 111 & \smatez{4}{4}    & \smatez{4}{4} \\ \hline
\smatez{1}{2} & 112 & \smatez{253}{423}  & \smatez{21}{31} \\ \hline
\smatez{1}{2} & 114 & \smatez{32}{40}   & \smatez{16}{16} \\ \hline
\smatez{1}{2} & 116 & \smatez{15}{15}   & \smatez{6}{6} \\ \hline
\smatez{1}{2} & 117 & \smatez{28}{28}   & \smatez{28}{28} \\ \hline
\smatez{1}{2} & 120 & \smatez{528}{1156}  & \smatez{125}{504} \\ \hline
\smatez{1}{2} & 126 & \smatez{178}{260}  & \smatez{113}{137} \\ \hline
\smatez{1}{2} & 129 & \smatez{4}{4}    & \smatez{4}{4} \\ \hline
\smatez{1}{2} & 130 & \smatez{8}{28}    & \smatez{0}{20} \\ \hline
\smatez{1}{2} & 132 & \smatez{55}{115}   & \smatez{4}{2} \\ \hline
\smatez{1}{2} & 136 & \smatez{92}{100}   & \smatez{44}{52} \\ \hline
\smatez{1}{2} & 140 & \smatez{58}{100}   & \smatez{12}{42} \\ \hline
1 & 144 & 1444 & 212 \\ \hline
1 & 147 & 12   & 12 \\ \hline
1 & 148 & 15   & 6 \\ \hline
1 & 152 & 45   & 2 \\ \hline
1 & 155 & 8    & 8 \\ \hline
1 & 156 & 222  & 144 \\ \hline
1 & 160 & 3230 & 1331 \\ \hline
1 & 162 & 240  & 7 \\ \hline
1 & 164 & 15   & 6 \\ \hline
1 & 165 & 12   & 12 \\ \hline
1 & 168 & 801  & 330 \\ \hline
1 & 171 & 40   & 40 \\\hline
1 & 176 & 246  & 17 \\\hline
1 & 180 & 344  & 84 \\\hline
1 & 182 & 14   & 6 \\\hline
1 & 183 & 4    & 4 \\\hline
1 & 184 & 45   & 2 \\\hline
1 & 186 & 32   & 16 \\\hline
1 & 189 & 140  & 140 \\\hline
1 & 190 & 12   & 4 \\\hline
\phantom{1}& &
\end{array}}\]

So for instance, for all groups $G$ of order $n=20$ and $p\in\{2,5\}$ there are in total $16$ nontrivial absolutely irreducible $\Fu{p} G$-modules up to isomorphism, of which $6$ are not generated by a James triple;
there are $20$ nontrivial absolutely irreducible $\Fu{p^2} G$-modules, of which $10$ are not generated by a James triple;
there are $16$ nontrivial absolutely irreducible $\Fu{p^3} G$-modules, of which $6$ are not generated by a James triple;
there are $28$ nontrivial absolutely irreducible $\Fu{p^4} G$-modules, of which $6$ are not generated by a James triple.

\end{Example}
\bibliographystyle{abbrvurl}
\bibliography{references}

@book{Bou,
  author    = {Nicolas Bourbaki},
  title     = {Éléments de mathématique, Algèbre, Chapitres 4 à 7},
  year      = {2007},
  publisher = {Springer}
}

@book{Hupp,
  author    = {Bertram Huppert},
  title     = {Finite Groups I},
  publisher = {Springer},
  year      = {2025}
}

@book{Jam,
  author    = {Gordon James},
  title     = {The Representation Theory of the Symmetric Groups},
  publisher = {Springer},
  year      = {1978}
}

@book{Jesp,
  author    = {Eric Jespers and Gabriela Olteanu and Angel del Rio},
  title     = {Rational Group Algebras of Finite Groups},
  publisher = {Springer},
  year      = {2010}
}

@phdthesis{NK,
  author = {Nora Krau{\ss}},
  title  = {Some constructions of irreducible modules over finite groups},
  school = {Universit{\"a}t Stuttgart},
  year   = {2026},
  url   = {https://elib.uni-stuttgart.de/handle/11682/19179}
}

@article{Magma,
  author  = {Wieb Bosma and John Cannon and Catherine Playoust},
  title = 	 {The {Magma} algebra system. {I}. {The} user language},
  journal = {Journal of Symbolic Computation},
  volume  = {24},
  year    = {1997},
  pages   = {235--265}
}

@inproceedings{Parker1984,
  author    = {Parker, Richard A.},
  title     = {The computer calculation of modular characters (the {Meat-Axe})},
  booktitle = {Computational Group Theory},
  pages     = {267--274},
  publisher = {Academic Press},
  year      = {1984}
}

@incollection{Parker1998,
  author    = {Parker, Richard A.},
  title     = {An integral ``{Meat-axe}''},
  booktitle = {The Atlas of Finite Groups: Ten Years On},
  series    = {London Mathematical Society Lecture Note Series},
  volume    = {249},
  pages     = {215--228},
  publisher = {Cambridge University Press},
  year      = {1998}}

@article{HoltRees1994,
  author  = {Holt, Derek F. and Rees, Sarah},
  title   = {Testing modules for irreducibility},
  journal = {Journal of the Australian Mathematical Society. Series A},
  volume  = {57},
  number  = {1},
  pages   = {1--16},
  year    = {1994}
}

@article{Shoda1933,
  author    = {Kenjiro Shoda},
  title     = {{{\"U}ber die monomialen Darstellungen einer endlichen Gruppe}},
  journal   = {Proceedings of the Physico-Mathematical Society of Japan. 3rd Series},
  volume    = {15},
  number    = {7-8},
  pages     = {249--257},
  year      = {1933}
}

@phdthesis{Steel2012,
  author     = {Allan Kenneth Steel},
  title      = {Construction of Ordinary Irreducible Representations of Finite Groups},
  school     = {University of Sydney},
  year       = {2012},
  url        = {https://magma.maths.usyd.edu.au/users/allan/reps/AllanSteelPhD.pdf}
}
\vspace*{5mm}

\begin{flushleft}
Nora Krau\ss \\
Universit\"at Stuttgart \\
Fachbereich Mathematik \\
Pfaffenwaldring 57 \\ 
70569 Stuttgart \\
\verb|kraussna@mathematik.uni-stuttgart.de|
\end{flushleft}
\end{document}